\newtheorem{theorem}{Theorem}[]
\newtheorem{proposition}[theorem]{Proposition}
\newtheorem{lemma}[theorem]{Lemma}
\newtheorem{corollary}[theorem]{Corollary}
\newtheorem*{question}{Question}
\theoremstyle{definition}\newtheorem{definition}[theorem]{Definition}
\theoremstyle{definition}
\theoremstyle{remark}\newtheorem{remark}{Remark}
\definecolor{steel}{HTML}{396a93}
\def\l2{{L^2(du)}}
\def\hh1{{H^1(du)}}
\def\SU{\mathcal{U}}
\def\R{\mathbb{R}}
\def\Z{\mathbb{Z}}
\def\S{\mathbb{S}}
\def\M{\mathcal{M}}
\def\I{\mathcal{I}}
\def\SC{\mathcal{C}}
\def\SA{\mathcal{A}}
\newcommand{\sS}{\mathcal{S}}
\newcommand{\N}{\mathbb{N}}
\newcommand{\G}{\mathcal{G}}
\newcommand{\HoneC}{H^1(du)\setminus\SA}
\newcommand{\DQX}{X''\!*\G}
\newcommand{\DAX}{\Ro X'\hspace{-0.5mm}*\G}
\newcommand{\norm}[1]{{\left \Vert {#1}\right \Vert}}
\newcommand{\ip}[1]{{\left \langle {#1} \right \rangle}}
\newcommand{\IPh}[2]{{\left \langle {#1},{#2} \right \rangle_{H^1(du)}}}
\newcommand{\h}[1]{\left\|#1\right\|_{H^1(du)}}
\renewcommand{\l}[1]{\left\|#1\right\|_{L^2(du)}}
\newcommand{\IPl}[2]{{\left \langle {#1},{#2} \right \rangle_{L^2(du)}}}
\newcommand{\abs}[1]{\left | {#1}\right |}
\DeclareMathOperator{\rot}{R}
\DeclareMathOperator{\Ro}{R}
\DeclareMathOperator{\crit}{crit}
\DeclareMathOperator{\ran}{ran}
\begin{document}
\title[Sobolev gradient flow]{A Sobolev gradient flow for the area-normalised Dirichlet energy of $H^1$ maps}

\author[S. Okabe]{Shinya Okabe}
\address{Mathematical Institute\\ Tohoku University\\
Sendai 980-8578\\
Japan}
\email{shinya.okabe@tohoku.ac.jp}

\author[P. Schrader]{Philip Schrader}
\address{
School of Mathematics, Statistics, Chemistry and Physics\\
Murdoch University\\
South St, Murdoch $6150$, WA\\
Australia}
\email{phil.schrader@murdoch.edu.au}

\author[G. Wheeler]{Glen Wheeler}
\address{Institute for Mathematics and its Applications, School of Mathematics and Applied Statistics\\
University of Wollongong\\
Northfields Ave, Wollongong, NSW $2500$\\
Australia}
\email{glenw@uow.edu.au}

\author[V.-M. Wheeler]{Valentina-Mira Wheeler}
\address{Institute for Mathematics and its Applications, School of Mathematics and Applied Statistics\\
University of Wollongong\\
Northfields Ave, Wollongong, NSW $2500$\\
Australia}
\email{vwheeler@uow.edu.au}

\thanks{
	This research was supported in part by
grants
JSPS KAKENHI 
20KK0057, 21H00990 and ARC DP180100431, DE190100379.
}

\subjclass[2020]{53E99, 34C40, 58B20}

\begin{abstract}
In this article we study the $H^1(du)$-gradient flow for the energy $E[X] =
Q[X]/A[X]$ where $Q[X]$ is the Dirichlet energy of $X$, $A[X]$ is the signed
enclosed area of $X$, and $X:\S\rightarrow\R^2$ is a $H^1(du)$ map.
We prove that solutions with initially positive signed enclosed area exist
eternally, and converge as $t\rightarrow\infty$ to a (possibly multiply-covered) circle.
In this way we recover an improved parametrised isoperimetric inequality for
$H^1(du)$ maps.
\end{abstract}

\maketitle


\section{Introduction}

Consider the space $H^1(du)$: maps $X:\S\rightarrow\R^2$ (we use $\S$ to mean
$\R/2\pi\Z$) with distributional derivative $X_u$ (or $X'$) belonging to
$L^2(du)$ endowed with the metric
\[
\IPh{X}{Y} = \IPl{X}{Y} + \IPl{X'}{Y'}
	= \int_0^{2\pi} X(u)\cdot Y(u) + X_u(u)\cdot Y_u(u)\,du
\]
where $X,Y\in H^1(du)$.
On $H^1(du)$ we consider three functionals:
\begin{align*}
\text{Length:}\qquad 
L[X] &= \int_0^{2\pi} |X_u|\,du\,,
\\
\text{Signed area:}\qquad 
A[X] &= -\frac12\int_0^{2\pi} X(u)\cdot\Ro X_u(u)\,du\,,\text{ and}
\\
\text{Dirichlet energy:}\qquad 
Q[X] &= \frac12\int_0^{2\pi} |X_u|^2\,du\,.
\end{align*}
Above we use $\Ro$ to denote the matrix $\text{Rot}{\,}_{\frac\pi2} = \begin{pmatrix}0 & -1 \\ 1 & 0
\end{pmatrix}$ which is rotation through an angle of $\frac\pi2$ in the
counter-clockwise direction.
Each of the functionals $L, A$ and $Q$ are well-defined on $H^1(du)$ (see Section \ref{SNlengthareadiri}).
The quantity $A[X]$ agrees with the measure of the interior region of a map $X\in H^1(du)$ if $X$
is also a Jordan curve.
Otherwise, some connected components of $\R^2\setminus X(\S)$ may count for `negative' area.
For example, if $X$ is a symmetric lemniscate then $A[X] = 0$. 
This is the reason for the name \emph{signed area}.

On the subspace $H^1_+(du)$ of $H^1(du)$ consisting of maps $X$ for which
$A[X]>0$, we consider the energy
\[
	E[X] = \frac{Q[X]}{A[X]} = -\frac{\int_0^{2\pi} |X_u|^2\,du}{\int_0^{2\pi} X\cdot\Ro X_u\,du}
\,.
\]
For $X\in H^1_+(du)$, the energy $E[X]$ is finite and controls the isoperimetric
ratio (see Lemma \ref{LMeiso}).
Indeed, in the regular homotopy class of an $n$-times covered circle, critical
points of $E$ are only the $n$-times covered circles themselves.
It is therefore expected to be isoperimetrically interesting to study the
minimisation of this energy $E$.

In the present paper, our goal is to study an evolution equation that reduces
the energy $E$.
As $E$ involves part of the $H^1(du)$ norm itself, our approach is to
consider the $H^1(du)$-gradient flow of $E$.
This is our main object of study, the evolution equation
\begin{equation}
\label{EQevol}
X_t = -DE[X]
\end{equation}
where now $X:\S\times[0,T)\rightarrow\R^2$ is a one-parameter family of maps
$X(\cdot,t)\in H^1_+(du)$, and by $DE$ we have denoted the $H^1(du)$-gradient
of $E$.

Sobolev gradients have been considered for quite some time in the literature.
We refer the interested reader to \cite{Neuberger:2010aa} for a survey.
Very recently flows of curves \cite{SWW,OS} have been considered, using a
Sobolev Riemannian metric.
In \cite{SWW}, the $H^1(ds)$-gradient flow for length is studied, and
convergence of a rescaling to an asymptotic shape is proved.
However, there is no useful characterisation of this shape.
In particular, this will not yield a geometric inequality (for instance, the
isoperimetric inequality).

This was understood to be due to the curve shrinking too fast and scale
dependence of the gradient.
One natural way to overcome this is to introduce a constraint into the flow.
Here, we take a slightly different approach.
We penalise smallness of the area directly in the energy, and take the flat
$H^1(du)$-gradient.
Having the energy be a quotient penalises very strongly maps where the
denominator vanishes.
A typical consequence of this is control on the denominator, which we are lucky
enough to obtain (see Corollary~\ref{CYmiracle}).
Second, the flat gradient allows us to consider a greater variety of techniques
for the analysis of the flow.
In particular we wish to highlight our use of the Morse--Bott criterion (due to
Feehan~\cite{feehan2020morse}) to establish convergence of the flow.

In this paper we 
 have two main contributions: (1) Theorem \ref{TMmain} below on the flow
itself; and (2) an application of the theorem to isoperimetry (Theorem
\ref{TMparamiso}).

\begin{theorem}
\label{TMmain}
Let $X_0\in H^1_+(du)$.
The $H^1(du)$-gradient flow for $E$ with initial data $X_0$,
$X:\S\times\R\rightarrow\R^2$, exists eternally and converges exponentially fast in $H^1(du)$ to
a multiply-covered circle.
\end{theorem}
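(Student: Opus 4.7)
The plan is to execute the standard gradient-flow program on the open set $H^1_+(du)$: short-time existence, a priori bounds, subsequential convergence, and then full convergence via Morse--Bott. Since $E$ is smooth on $H^1_+(du)$ (which is open because $A$ is continuous) and its $H^1(du)$-gradient $DE$ is locally Lipschitz, Picard--Lindel\"of in Hilbert space gives a unique maximal solution $X\in C^1([0,T),H^1_+(du))$, and the identity $\frac{d}{dt}E[X(t)] = -\h{DE[X(t)]}^2$ shows $E$ is non-increasing and yields the dissipation bound $\int_0^\infty \h{DE[X(t)]}^2\,dt \leq E[X_0] - \inf E < \infty$.

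To promote local to eternal existence I use two a priori estimates. First, the area lower bound $A[X(t)] \geq c_0 > 0$ from Corollary~\ref{CYmiracle} combines with $Q[X(t)] = E[X(t)]\,A[X(t)] \leq E[X_0]\,A[X(t)]$ to give a uniform $L^2(du)$ bound on $X_u(t)$. Second, translation invariance of $E$ forces $DE[X]$ to have zero mean, so the mean $\int_0^{2\pi} X(u,t)\,du$ is conserved along the flow; this lifts the derivative bound to a uniform $H^1(du)$ bound on $X(t)$. With $A$ away from zero and the $H^1(du)$ norm bounded, the flow cannot leave $H^1_+(du)$ in finite time, so $T=+\infty$.

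The dissipation bound then yields a sequence $t_k\to\infty$ with $\h{DE[X(t_k)]}\to 0$, and the uniform $H^1(du)$ bound supplies weak subsequential compactness. Strong $H^1(du)$ convergence to a critical point $X_\infty$ follows by passing to the critical-point equation and exploiting the compactness of the embedding $H^1\hookrightarrow L^2$. Since the critical points of $E$ in $H^1_+(du)$ are exactly the multiply-covered round circles, $X_\infty$ is such a circle.

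Full exponential convergence I would obtain from Feehan's Morse--Bott convergence theorem~\cite{feehan2020morse}. For each covering number $n$, the critical set consists of maps $c + r(\cos(nu+\phi),\sin(nu+\phi))^T$ with $c\in\R^2$, $r>0$, $\phi\in\S$, forming a smooth four-dimensional submanifold of $H^1(du)$. The principal obstacle, and the main technical step, is verifying the Morse--Bott property at such a critical circle: that the $H^1(du)$-Hessian of $E$ is positive semidefinite with kernel exactly the tangent space to this family. I would compute the second variation of $E$ at a round $n$-circle and diagonalise using Fourier modes $e^{iku}$; the translation, dilation and phase-rotation directions must land in the kernel, and a direct eigenvalue count must show every remaining mode contributes a strictly positive eigenvalue bounded uniformly below. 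Given Morse--Bott, Feehan's theorem furnishes a \L{}ojasiewicz--Simon inequality with optimal exponent $\tfrac12$, and feeding this into the dissipation identity yields exponential $H^1(du)$-convergence of $X(t)$ to a unique limit circle.
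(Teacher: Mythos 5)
Your overall architecture (Picard--Lindel\"of, a priori bounds, subconvergence, Feehan's Morse--Bott criterion, exponential rate) matches the paper's, but the a priori estimate step has a genuine circularity that the paper resolves with an idea you never identify. You propose to get a uniform bound on $\|X_u\|_{L^2(du)}$ from the area lower bound together with $Q=E\cdot A\le E[X_0]\,A$; but that inequality bounds $Q$ from above only if $A$ is bounded \emph{above}, and $|A[X]|\le\|X\|_{L^2(du)}\|X_u\|_{L^2(du)}$, so an upper bound on $A$ already presupposes the $H^1(du)$ control you are trying to establish. Worse, the area \emph{lower} bound of Corollary~\ref{CYmiracle} is itself not free: it is deduced from energy monotonicity together with a positive lower bound on $Q[X(\cdot,t)]$, which in turn rests on the fact that $\|X(\cdot,t)-\overline{X}(t)\|_{H^1(du)}$ is \emph{exactly conserved} along the flow. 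The key structural fact of the paper (Propositions~\ref{PNmiracle}--\ref{PNmiracle3}) is that $\IPh{X}{DE[X]}=A^{-1}(2Q)+EA^{-2}(-2A)Q/A\cdot(\cdots)=0$, i.e.\ $\frac{d}{dt}\|X\|_{H^1(du)}^2=0$, and likewise $\overline{X}$ is conserved; these conservation laws are what rule out the curve shrinking to a point (where $Q,A\to0$ with $E$ bounded) and are what close the estimates. Without them your chain of bounds does not close, so the passage from local to eternal existence is not justified as written.

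On the convergence half: your plan to verify Morse--Bott by diagonalising the second variation at an $\ell$-covered circle in Fourier modes is a different (and unexecuted) route from the paper's, which instead checks Feehan's Definition~\ref{def:gmb} directly by computing $d\mathcal{M}[Y]W=A^{-1}[Y](W''-E[Y]\Ro W')*\G$, identifying its kernel with the tangent space to $\sS$, and establishing the range condition by explicitly solving the periodic linear ODE $W_u-E[Y]\Ro W=A[X_0]\mathcal Y$. Note that Feehan's hypothesis is a statement about $\ker d\mathcal M(Y)$ and $\ran d\mathcal M(Y)$, not about positive semidefiniteness of the Hessian, so even if you carried out the spectral computation you would still need to translate it into Feehan's range condition. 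Finally, your claim that strong $H^1(du)$ subconvergence ``follows from compactness of $H^1\hookrightarrow L^2$'' is too quick: that embedding only upgrades weak $H^1(du)$ convergence to strong $L^2(du)$ (or $C^{0,\alpha}$) convergence, and the paper needs a separate argument (Proposition~\ref{PNconv2}) exploiting the identity $X''*\G=A[X]DE[X]+E[X]\Ro X'*\G$ and the vanishing of the gradient along the subsequence to recover convergence of the derivative term.
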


Our proof is broken into a number of steps.
\begin{enumerate}
\item We classify equilibria (Proposition~\ref{PNequil}).
\item We show local well-posedness by proving estimates for $E$ and $DE$ and
applying Picard--Lindel\"of (Theorem~\ref{TMSTE}).
\item We show eternal existence by proving estimates for the energy and the
area (Corollary~\ref{CYeternal}).
\item We prove convergence by first showing subconvergence in the $H^1(du)$
topology (Proposition \ref{PNconv2}), and then using the aforementioned
Morse--Bott criterion from Feehan \cite{feehan2020morse} that gives an optimal
\L ojasiewicz--Simon gradient inequality (Theorem~\ref{TMmb} and Corollary
\ref{CYrate}).
\end{enumerate}

This paper is organised as follows.
In the next subsection we give an application of Theorem~\ref{TMmain} to
\emph{parametrised isoperimetry}.
By this we mean the isoperimetric inequality for parametrised immersed curves
that is sharp on multiply-covered circles.
Our setting is further detailed in Section~2.
First variations and calculations of the $H^1(du)$-gradients of the functionals
under consideration are given in Section~3.
All equilibra for the flow are classified in Section~4.
The local well-posedness of the flow is proved in Section~5.
Section~6 is concerned with establishing a-priori estimates, proving eternal
existence, and giving subconvergence.
Full convergence, using the Morse--Bott criterion from Feehan, is the subject of
Section~7.
The argument establishing a parametrised isoperimetric inequality (see Theorem~
\ref{TMparamiso} in the next subsection) is in Section~8, concluding the paper.


\subsection{Application to parametrised isoperimetry}

To each map $X\in H^1(du)$ we may associate a number $\I[X]$,
$\I:H^1(du)\rightarrow[0,\infty]$ called the isoperimetric ratio.
It is defined by
\[
\I[X] = \begin{cases} \frac{L^2[X]}{4\pi |A[X]|}\qquad\text{ if $A[X] \ne 0$}\,, \\ +\infty\quad\ \qquad\text{ otherwise}.\end{cases}
\]
Above we have used $L[X]$ to denote the (arc) length of $X$, and $A$ for the
signed enclosed area as before.

Classically, the isoperimetric inequality states that $\I[X] \ge 1$, with
$\I[X] = 1$ if and only if $X$ is a standard round circle.
We refer the interested reader to the survey \cite{Oss78}.

Naturally, there have been numerous generalisations to the classical isoperimetric inequality.
One that we pursue here is the following.
Considering a singly-covered circle $Y_1$, we are led to conjecturing that $\I[X] \ge \I[Y_1] = 1$.
If we instead begin with an $n$-times covered circle $Y_n$, we find $\I[Y_n] = n$.
Along this line of reasoning, we pose the following general question.

\begin{question}
Fix $n>1$.
What is the largest class of maps $\SC\subset H^1(du)$ such that
\begin{equation}
\label{EQparamiso}
	\I[X] \ge \I[Y_n] = n\text{ for all $X\in\SC$}?
\end{equation}
\end{question}

We call inequalities of the form \eqref{EQparamiso} \emph{parametrised
isoperimetric inequalities}.
Our motivation for this name is that the inequality \eqref{EQparamiso} focuses
on the case of immersed parametrised curves, as opposed to the most general
classical isoperimetric inequalities that approach the question from the
perspective of geometric measure theory.

It seems difficult at first to find an appropriate class of curves in which to
consider $Y_n$ a minimiser of the isoperimetric ratio.
This is because the turning number or total turning angle (divided by $2\pi$)
of the tangent vector seems to be an important notion.
But by incorporating many small loops into a given map $X\in H^1(du)$, which do
not affect the length nor the area very much but nevertheless dramatically
change the turning number, one is quickly led to believing that the turning
number should not play a significant role in the parametrised isoperimetry
question.

On the other hand, the rotational symmetry satisfied by $Y_n$ does play a
significant role.
Indeed, explicit calculation with classical immersed curves such as folia with
$m$-leaves that cover a circle $n$ times (in terms of total angle), shows that
their isoperimetric ratio is greater than that of the corresponding $Y_n$.
In our view, this is decisive.

While a complete answer to the parametrised isoperimetry question remains open,
there exists some work in the literature in this direction.
Partial answers were given by Epstein--Gage~\cite{EG}, Chou~\cite{Ch}, Wang--Li--Chao~\cite{WLC} 
and S\"ussmann~\cite{Suess}.
These works rely on parametrisation by angle, which motivated their
assumption that the curve is locally convex, in addition to rotationally symmetric.
This also necessitates some additional regularity, as locally convex curves are
almost everywhere twice differentiable.
For non-convex curves, Miura--Okabe~\cite{MO} answered the question using a
calculus of variations approach on curves with regularity class $W^{2,1}(du)$.

For us here, we are also able to give a partial answer.
Our proof uses the Sobolev $H^1(du)$-gradient flow for $E$.

\begin{theorem}
\label{TMparamiso}
Suppose $X\in H^1(du)$.
Then
\[
\I[X] \ge 1\,,
\]
with equality if and only if $X$ is a singly-covered circle.

Furthermore, if $X$ is a rotationally symmetric map in the sense that there are
positive integers $n, m$ such that $1\le n \le m$ and
\begin{equation}
\label{EQsymintro}
	X\Big(u+\frac{2\pi}{m}\Big) = \text{Rot}{\,}_{\frac{2\pi n}{m}} X(u)\,,
\end{equation}
then
\begin{equation}
\label{EQparamisoimeqintro}
	\I[X] \ge n\,,
\end{equation}
with equality if and only if $X$ is an $n$-times covered circle. 
\end{theorem}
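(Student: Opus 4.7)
The plan is to reduce both assertions to energy bounds on the constant-speed reparametrisation of $X$, and then to extract those bounds from the long-time behaviour of the flow via Theorem~\ref{TMmain}.

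First I would reduce to $A[X]>0$: the case $A[X]=0$ gives $\I[X]=+\infty$, and an orientation reversal $X(\cdot)\mapsto X(-\cdot)$ preserves both $L$ and $\I$ while flipping the sign of $A$ (and substituting the symmetry parameter $n$ with $m-n$ in the second part). Let $\tilde X$ denote the constant-speed reparametrisation of $X$ on $\S$. Since $L$ and $A$ are reparametrisation invariant while Cauchy--Schwarz gives $Q\geq L^2/(4\pi)$ with equality precisely at constant speed,
\[
E[\tilde X]=\frac{Q[\tilde X]}{A[\tilde X]}=\frac{L^2[X]}{4\pi A[X]}=\I[X].
\]
Applying Theorem~\ref{TMmain} with initial datum $\tilde X\in H^1_+(du)$ produces a flow converging in $H^1(du)$ to a $k$-times covered circle $Y_\infty$ with $k\geq 1$. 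Monotonicity of $E$ along the flow and the identity $E[Y_k]=k$ for a $k$-covered circle yield
\[
\I[X]\;=\;E[\tilde X]\;\geq\;E[Y_\infty]\;=\;k\;\geq\;1,
\]
settling the first assertion. Equality forces the flow to be stationary at $\tilde X$, so Proposition~\ref{PNequil} identifies $\tilde X$, and hence $X$ up to reparametrisation, as a once-covered circle.

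For the rotationally symmetric case, the additional ingredient is that the symmetry \eqref{EQsymintro} is preserved by the flow. I would introduce the linear operator $T:H^1(du)\to H^1(du)$ defined by $(TX)(u)=\text{Rot}_{-2\pi n/m}X(u+2\pi/m)$. This is an isometry of $H^1(du)$ under which both $Q$ and $A$ are invariant, so $E$ is $T$-invariant and $DE$ is $T$-equivariant; by uniqueness in Theorem~\ref{TMSTE} the closed subspace $\{TX=X\}$ is flow-invariant. The constant-speed reparametrisation of a symmetric $X$ is itself symmetric because $|X_u|$ is automatically $\tfrac{2\pi}{m}$-periodic, so arc length commutes with the shift $u\mapsto u+2\pi/m$. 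Hence $Y_\infty$ is a $T$-symmetric $k$-covered circle; a direct calculation parametrising $Y_\infty(u)=c+r(\cos(ku+\alpha),\sin(ku+\alpha))$ and comparing $Y_\infty(u+2\pi/m)$ with $\text{Rot}_{2\pi n/m}Y_\infty(u)$ forces $c=0$ and $k\equiv n\pmod m$, so $k\in\{n,n+m,n+2m,\dots\}$. Combined with $k\geq 1$ and $1\leq n\leq m$ this gives $k\geq n$, and hence $\I[X]=E[\tilde X]\geq k\geq n$. Equality again forces $\tilde X=Y_\infty$ to be an $n$-covered circle.

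I expect the main obstacle to be verifying the $T$-equivariance of the $H^1(du)$-gradient directly from its explicit formula from Section~3, together with the closedness of the symmetric subspace, so as to apply the uniqueness part of Theorem~\ref{TMSTE}. A smaller technical point is the construction of the constant-speed reparametrisation of a general $H^1(du)$-map when $|X_u|$ may vanish on sets of positive measure: one uses the monotone continuous surjection $s:[0,2\pi]\to[0,L]$ defined by $s(u)=\int_0^u|X_v|\,dv$, observing that $X$ is constant on every level set of $s$, to produce a Lipschitz (hence $H^1(du)$) reparametrisation with constant speed $L/(2\pi)$, which is well-defined whenever $L[X]>0$ and in particular whenever $A[X]\neq 0$.
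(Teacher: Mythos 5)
Your proposal follows essentially the same route as the paper's Section~8 argument: reduce to $A[X]>0$, pass to the constant-speed reparametrisation so that $\I[X]=E[\tilde X]$, run the flow of Theorem~\ref{TMmain}, use monotonicity of $E$ and the classification of equilibria to get $\I[X]\ge k\ge 1$, and propagate the symmetry \eqref{EQsymintro} to the limit by uniqueness to force $k\equiv n\pmod m$ and hence $k\ge n$. You are in fact more explicit than the paper on two points it glosses over --- that the constant-speed reparametrisation inherits the symmetry, and that orientation reversal turns the parameter $n$ into $m-n$ --- though note that this last observation shows the reduction to positive area only yields $\I[X]\ge m-n$ in the symmetric case, a subtlety the paper's own proof silently inherits rather than a defect introduced by your argument.
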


\begin{remark}
\label{RMKnequalsm}
The condition \eqref{EQsymintro} with $n=m$ is a degenerate case.
If $n=m$, then the image $X(\S)$ is $m$-copies of $X([0,\frac{2\pi}{m}])$.
The leaf $u\mapsto X(mu):\S\rightarrow\R^2$ is in $H^1(du)$ and so if $n=m$ the
flow is equivalent to just flowing a leaf by itself.
Accordingly the conclusion $\I[X]\ge n$ is a consequence of the classical
isoperimetric inequality.
Alternatively, we may flow directly the multiply-covered configuration to a
limit $Y_n$ which we prove is in general one of a family of parametrisations of
possibly multiply-covered circles.
The strong convergence and uniqueness implies that the limit $Y_n$ must, at the
parametrisation level, satisfy \eqref{EQsymintro}.
This implies that it is at least $n$-times covered and so in particular
$\I[Y_n]\ge n$.
\end{remark}

\begin{remark}
We may allow $n>m$ in the theorem if we replace the conclusion
\eqref{EQparamisoimeqintro} with $\I[X] \ge n_0$ where $n_0$ is the least
non-negative residue of $n$ modulo $m$.
The reason for this is as follows.
Since $\text{Rot}{\,}_{2k\pi + \theta} = \text{Rot}{\,}_{\theta}$, if $n>m$
were allowed then the condition \eqref{EQsymintro} would be equivalent to
\begin{equation*}
	X\Big(u+\frac{2\pi}{m}\Big) = \text{Rot}{\,}_{\frac{2\pi n_0}{m}} X(u)\,.
\end{equation*}
Then, the isoperimetric inequality obtained from this is $\I[X] \ge n_0$ (and
not $\I[X] \ge n$).
\end{remark}

\begin{remark}
Our parametrised isoperimetric inequality does not need the turning number
($\omega = \int_0^L k\,ds$) to be defined and indeed the turning number plays no
role.
We note that it may be possible to reduce the regularity requirement from
\cite{MO} and remove the need for a turning number by using an approximation
argument.
In this way, a statement equivalent to ours obtained here using a flow approach
might be obtained.
\end{remark}

The choice of $n$ gives us a rotation action (via \eqref{EQsymintro}) that, due
to strong convergence and uniqueness, will be preserved into the limit.
This gives us a symmetry condition that any candidate limit for the flow with
initial data satisfying \eqref{EQsymintro} must also satisfy.
This implies that the limit must be at least $n$-times covered, and so
(unless $n=1$) the lower bound for the isoperimetric ratio is improved.

We wish to give a constructive example using a given leaf to generate four
parametrisations, each satisfying the rotational symmetry assumption
\eqref{EQsymintro} for a different value of $n$.
Take $X:[0,\frac{2\pi}{m}]\rightarrow\R^2$ to be a curve of class $H^1(du)$
with $X(0) = X(2\pi/m) = 0$, for instance a single petal in an
$n$-folium.
In general, each choice of $n\in\{1,\ldots,m\}$ will yield a different
parametrisation, although some will be multiply-covered and so not
geometrically distinct if $n$ is not relatively prime to $m$.

Let us consider the particular case of $m=4$. There are four subcases.

\begin{itemize}
\item[$(n=1)$] In this case we will be parametrising a quadrifolium in a
non-standard way, by moving from the first petal, to the second, to the third
and finally to the fourth in a counterclockwise loop.
This symmetry does not force the limit to be any more than just once-covered,
which is not an improvement over the classical isoperimetric inequality.
\item[$(n=2)$] In this case the curve $X(\S)$ has a figure-8 shape, with two leaves each covered twice.
The limit parametrisation must be at least twice-covered, and we find $\I[X] \ge 2$.
\item[$(n=3)$] This is the case of the classical quadrifolium.
From this choice the parametrisation begins in the first leaf, then moves to
the fourth leaf, then the third, and finally the second.
The limit parametrisation must be at least thrice-covered, and we find $\I[X] \ge 3$.
\item[$(n=4)$]
In this case, similar to $n=2$, the parametrisation covers the single given
leaf four times.
The limit parametrisation must be at least four-times-covered, and we find
$\I[X] \ge 4$ (cf. Remark \ref{RMKnequalsm}).
\end{itemize}



\subsection*{Acknowledgements}

The authors are grateful for the support provided from JSPS KAKENHI Grants
20KK0057 and 21H00990 to facilitate travel to Tohoku University where the
majority of this research was completed.

The fourth author acknowledges support from ARC Discovery Project DP180100431
and ARC  DECRA DE190100379.

\section{Setting}

In this paper our evolving objects are maps $X\in H^1(du)$, by which we mean the standard notion of maps $X:\S\rightarrow\R^2$ with distributional derivative $X_u$ (or $X'$) belonging to $L^2(\S;\R^2)$.
Here we use $\S$ to denote the standard unit circle, with model $\R/2\pi\Z$, and $\R^2$ is the standard Euclidean plane.

The norm $||X||_{H^1(du)}$ is
\[
	||X||_{H^1(du)}^2 = \int |X|^2\,du + \int |X_u|^2\,du\,.	
\]
Single vertical bars denote the norm of a vector in the plane.
Integrals without limits are always integrated around the entire domain, that
is, from $0$ to $2\pi$.
Once we equip $H^1(du)$ with the inner product
\[
	\IPh{V}{W} = \IPl{V}{W} + \IPl{V_u}{W_u}
	= \int V\cdot W\,du + \int V_u\cdot W_u\,du
\]
the space $H^1(du)$ becomes Hilbert.

\subsection{Length, area, and the Dirichlet energy.}
\label{SNlengthareadiri}

We define three functionals on $H^1(du)$.
First, the length $L:H^1(du)\rightarrow[0,\infty)$ is defined by
\[
	L[X] = \int |X_u|\,du\,.
\]
Since the H\"older inequality implies $L[X] \le \sqrt{2\pi}||X||_{H^1(du)}$ the length functional is well-defined on $H^1(du)$.

Second, the (signed) area $A:H^1(du)\rightarrow\R$ is defined by
\[
	A[X] = -\frac12\int X\cdot \Ro X_u\,du\,.
\]
Above we used the notation $\Ro (x,y) = (-y,x)$ to denote the rotation of a vector counter-clockwise about the origin through an angle of $\frac\pi2$.
By the Cauchy--Schwartz, then H\"older, then Cauchy inequalities, we find
\begin{equation}
\label{EQareawelldef}
A[X] \le ||X||_{L^2(du)}\,||X_u||_{L^2(du)} \le \frac12 ||X||_{H^1(du)}^2
\,.
\end{equation}
This implies that the area functional is also well-defined on $H^1(du)$.

Third, the Dirichlet energy $Q:H^1(du)\rightarrow\R$ is defined by
\[
	Q[X] = \frac12\int |X_u|^2\,du\,.
\]
The functional $Q$ is well-defined on $H^1(du)$; indeed, it is part of the norm:
\[
	||X||_{H^1(du)}^2 = ||X||_{L^2(du)}^2 + 2Q[X]\,.
\]
We note the following lemma.

\begin{lemma}
Suppose $X\in H^1(du)$ and $C = \frac{1}{2\pi}\int X\,du = \overline{X}$.
The $H^1(du)$ norm of $X$ is controlled by the Euclidean length of $C$ and $Q[X]$; in particular,
\[
	||X||_{H^1(du)}^2 \le 4\pi|C|^2 + 6Q[X]\,.
\]
\end{lemma}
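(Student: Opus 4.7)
The plan is to decompose $X$ into its mean and mean-zero parts, then apply a Poincaré--Wirtinger inequality on $\S = \R/2\pi\Z$ to the latter.

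First, write $X = C + (X - C)$ where by construction the remainder $X - C$ has zero mean. Using the elementary inequality $|a + b|^2 \le 2|a|^2 + 2|b|^2$ pointwise and integrating yields
\[
\|X\|_{L^2(du)}^2 \le 2\int |C|^2\,du + 2\|X - C\|_{L^2(du)}^2 = 4\pi|C|^2 + 2\|X - C\|_{L^2(du)}^2.
\]

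Next, I would invoke the Poincaré--Wirtinger inequality on $\S = \R/2\pi\Z$: for any $f \in H^1(du)$ with $\int f\,du = 0$, one has $\|f\|_{L^2(du)}^2 \le \|f'\|_{L^2(du)}^2$. This is the sharp inequality on the circle of length $2\pi$, as can be seen immediately by expanding $f$ in a Fourier series $f(u) = \sum_{n\ne 0}(a_n\cos nu + b_n\sin nu)$ and using Parseval, where the factor $n^2 \ge 1$ compares $\|f'\|^2$ with $\|f\|^2$. Applied to $X - C$ (which is mean-zero componentwise in $\R^2$), this gives
\[
\|X - C\|_{L^2(du)}^2 \le \|X_u\|_{L^2(du)}^2 = 2Q[X].
\]

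Combining the two bounds with the definition $\|X\|_{H^1(du)}^2 = \|X\|_{L^2(du)}^2 + 2Q[X]$,
\[
\|X\|_{H^1(du)}^2 \le 4\pi|C|^2 + 2\cdot 2Q[X] + 2Q[X] = 4\pi|C|^2 + 6Q[X],
\]
as claimed. No step looks genuinely difficult; the only point requiring care is justifying the Poincaré--Wirtinger inequality in the $H^1(du)$ setting, but this is standard via Fourier series or density of smooth functions in $H^1(\S)$.
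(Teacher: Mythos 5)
Your proof is correct and follows essentially the same route as the paper's: decompose $X = C + (X-C)$, bound $\|X\|_{L^2(du)}^2 \le 4\pi|C|^2 + 2\|X-C\|_{L^2(du)}^2$, and apply the Poincar\'e--Wirtinger inequality on $\S$ componentwise to get $\|X-C\|_{L^2(du)}^2 \le \|X_u\|_{L^2(du)}^2 = 2Q[X]$. The only difference is that you spell out the Fourier-series justification of the Poincar\'e inequality, which the paper simply cites.
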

\begin{proof}
Note that $C$ is well-defined as $|C| \le \frac{||X||_{L^2(du)}}{\sqrt{2\pi}} \le \frac1{8\pi} + ||X||^2_{H^1(du)}$, where we used the Cauchy inequality.
Now observe the estimate
\[
||X-C||_{L^2(du)}^2 \le ||X_u||_{L^2(du)}^2
\]
which holds by applying the Poincar\'e inequality on $\S$ to each component function of $X-C$.
Thus
\begin{align*}
	||X||_{H^1(du)}^2 
	&= ||(X-C) + C||_{L^2(du)}^2 + ||X_u||_{L^2(du)}^2
\\
	&\le 2||X-C||_{L^2(du)}^2 + 4\pi|C|^2 + 2Q[X]
\\
	&\le 4\pi|C|^2 + 6Q[X]
\end{align*}
as required.
\end{proof}


\section{First variations and $H^1(du)$-gradients}

Let us consider now the energy
\[
	E[X] = \frac{Q[X]}{A[X]} = -\frac{\int |X_u|^2\,du}{\int X\cdot RX_u\,du}
	\,.
\]
The energy is not finite on all of $H^1(du)$ because it achieves the value
$+\infty$ on any non-trivial $H^1(du)$ map $X$ with $A[X]=0$.
Therefore we set $\SA = \{X\in H^1(du)\,:\, A[X] = 0\}$ and consider
$E:\HoneC\rightarrow[0,\infty)$.

We now introduce a key object: the 
fundamental solution to the differential operator $\partial_u^2 - 1$ on $\S$.
We use the notation $\G(u,\hat u)$ for this Greens function.
It has explicit expression given by
\[
	\G(u,\hat u) = -\frac{\cosh(|u-\hat u| - \pi)}{2\sinh\pi}\,.
\]
Apart from satisfying the functional equation, it has the expected symmetry and
regularity properties, along with the estimates $|\G_u| = |\G_{\hat u}| \le
\frac12$ and $|\G| \le \frac12\coth\pi$.

We also note the following fundamental identity.

\begin{lemma}
\label{LMfundiden}
For all $V \in H^1(du)$ and $W\in L^2(du)$ we have
\begin{equation*}
\IPh{V}{W*\G}
	= -\IPl{V}{W}\,,
\end{equation*}
and
\begin{equation*}
\IPh{V}{W_u*\G}
	= \IPl{V_u}{W}\,,
\end{equation*}
\end{lemma}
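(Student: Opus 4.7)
The plan is to exploit the defining property of the Greens function: direct differentiation of the explicit formula shows that $(\partial_u^2 - 1)\G(\cdot,\hat u) = \delta_{\hat u}$ on $\S$. The Dirac mass is produced by $\partial_u\sgn(u-\hat u) = 2\delta(u-\hat u)$ inside the second derivative of $\cosh(|u-\hat u|-\pi)$, and the factor $\sinh(-\pi) = -\sinh\pi$ that appears cancels with the prefactor $-1/(2\sinh\pi)$ to give exactly $\delta(u-\hat u)$. Consequently, for $W\in L^2(du)$ the convolution $f := W*\G$ lies in $H^2(du)$ and satisfies the distributional identity $f_{uu} - f = W$ in $L^2(du)$. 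A parallel observation — that differentiation can be shifted off $W$ and onto $\G$ under the convolution integral, using $\partial_{\hat u}\G = -\partial_u\G$ — shows that $W_u*\G$ is naturally interpreted as $\partial_u(W*\G) = f_u$, which remains in $L^2(du)$ even when $W_u$ does not exist classically.

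With this setup, both identities reduce to a short periodic integration by parts. For the first, using $\int V_u\cdot f_u\,du = -\int V\cdot f_{uu}\,du$ and then substituting $f_{uu} = f + W$,
\[
\IPh{V}{f} = \int V\cdot f\,du + \int V_u\cdot f_u\,du = \int V\cdot(f - f_{uu})\,du = -\int V\cdot W\,du = -\IPl{V}{W}.
\]
For the second, the symmetric computation (now integrate by parts on $\int V\cdot f_u\,du$ and use $(f_u)_u = f_{uu} = f + W$) gives
\[
\IPh{V}{f_u} = \int V\cdot f_u\,du + \int V_u\cdot(f + W)\,du = -\int V_u\cdot f\,du + \int V_u\cdot f\,du + \int V_u\cdot W\,du = \IPl{V_u}{W}.
\]

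The only delicate point is justifying these integration by parts at the regularity level $V\in H^1(du)$, $f\in H^2(du)$; this is routine on the periodic circle and I would handle it by a density argument, approximating $V$ and $W$ by smooth periodic maps, verifying the identities classically, and passing to the limit using continuity of the convolution map $W\mapsto W*\G : L^2(du) \to H^2(du)$, which in turn follows from the uniform $L^\infty$ bounds on $\G$ and $\G_u$ recorded in the paper. Beyond this bookkeeping I do not anticipate a genuine obstacle — the content of the lemma is really just the Greens function property of $\G$ translated into the $H^1(du)$ inner product.
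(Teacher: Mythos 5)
Your proposal is correct and follows essentially the same route as the paper: establish $(W*\G)_{uu} = W + W*\G$ from the functional equation $\G_{uu}-\G=\delta$, interpret $W_u*\G$ as $(W*\G)_u$ via $\G_{\hat u}=-\G_u$, and reduce both identities to a single periodic integration by parts. The extra density argument you sketch for the integration by parts is harmless but not needed beyond what the paper already assumes.
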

\begin{proof}
First note that the functional equation $\G_{uu} - \G = \delta$ implies
$W*\G\in H^2(du)$ and $(W*\G)_{uu} = W + W*\G$.
Using this, the symmetry of $\G$ and integration by parts, we find
\begin{align*}
\IPh{V}{W*\G}
	&= \IPl{V}{W*\G} + \IPl{V_u}{W*\G_u}
\\
	&= \IPl{V}{W*\G} - \IPl{V}{W + W*\G}
	= -\IPl{V}{W}\,.
\end{align*}
For the second identity, we use a similar argument, noting that $(W_u*\G)_u = W + W*\G$ and so
\begin{align*}
\IPh{V}{W_u*\G}
	&= \IPl{V}{W_u*\G} + \IPl{V_u}{(W_u*\G)_u}
\\
	&= \IPl{V}{(W*\G)_u} + \IPl{V_u}{W + W*\G}
	=  \IPl{V_u}{W}\,,
\end{align*}
as required.
\end{proof}

We shall use the notation $DQ$, $DA$ and $DE$ for the $H^1(du)$-gradient of the Dirichlet energy, area and the energy $E$ respectively.
We use a lowercase $d$ to denote the derivative.

First we calculate the $H^1(du)$-gradient of $Q$ and $A$.

\begin{lemma}
\label{LMqvar}
The $H^1(du)$-gradient of $Q$ at $X\in H^1(du)$ is
\[
DQ[X] = \DQX\,.
\]
\end{lemma}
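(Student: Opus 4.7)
The plan is to compute the Fr\'echet derivative $dQ[X]$ directly and then rewrite it as an $H^1(du)$-inner product using the machinery of Lemma \ref{LMfundiden}. Since $Q[X] = \tfrac12\int |X_u|^2\,du$ is a quadratic form in $X_u$, the first variation in the direction $V\in H^1(du)$ is immediately
\[
dQ[X](V) = \IPl{X_u}{V_u}\,.
\]
The task is then to exhibit an element $Z \in H^1(du)$ with $\IPh{Z}{V} = \IPl{X_u}{V_u}$ for every $V \in H^1(du)$; by the Riesz representation theorem such a $Z$ is unique and equals $DQ[X]$.

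Next I would invoke the second identity of Lemma \ref{LMfundiden} with the $L^2(du)$ function $W := X'$. That identity reads
\[
\IPh{V}{W_u * \G} = \IPl{V_u}{W}\,,
\]
so specialising to $W = X'$ yields $\IPh{V}{X'' * \G} = \IPl{V_u}{X_u} = dQ[X](V)$ for every $V \in H^1(du)$. By symmetry of the $H^1(du)$-inner product this gives $\IPh{X'' * \G}{V} = dQ[X](V)$, and the uniqueness of the gradient forces $DQ[X] = X'' * \G = \DQX$, as claimed.

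A subtle point worth addressing is that $X$ is only assumed to be $H^1(du)$, so $X''$ is a priori only a distribution; the notation $X''*\G$ must therefore be interpreted in the weak sense made legitimate by Lemma \ref{LMfundiden}. That lemma (applied with the purely $L^2$ datum $W = X'$) already guarantees the required membership $X'' * \G \in H^1(du)$, since the identity $(W_u*\G)_u = W + W*\G$ used in its proof exhibits $W_u * \G$ with an $L^2$ weak derivative. Consequently no additional regularity of $X$ is needed, and the main obstacle is really just bookkeeping: confirming that the convolution is well defined under the stated hypotheses and that the transition from $\IPl{X_u}{V_u}$ to $\IPh{\cdot}{\cdot}$ is justified. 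Once Lemma \ref{LMfundiden} is in place, both of these are routine.
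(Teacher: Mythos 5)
Your proof is correct and follows essentially the same route as the paper: compute the first variation $dQ[X](V)=\IPl{X_u}{V_u}$ and then identify the gradient via Lemma~\ref{LMfundiden}. The only minor difference is that you invoke the second identity of that lemma with $W=X'\in L^2(du)$, whereas the paper rewrites $\IPl{X_u}{V_u}$ as the distributional pairing $-\IPl{X_{uu}}{V}$ and applies the first identity; your variant stays squarely within the lemma's stated hypotheses and supplies the membership $X''*\G\in H^1(du)$ for free, so it is, if anything, slightly cleaner.
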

\begin{proof}
Let $\hat X = X + tV$ where $t>0$ and $X, V\in H^1(du)$.
We calculate (a.e.)
\[
	\frac{d}{dt}|\hat X_u|^2
	= 2|\hat X_u|\,\frac{d}{dt}|\hat X_u|
	= 2(\hat X_u\cdot V_u)
\,.
\]
Thus
\begin{align}
\label{EQphil1}
	dQ[X](V) = \IPl{X_u}{V_u}
\,.
\end{align}
Furthermore the $H^1(du)$-gradient of $Q$ at $X$, $DQ[X]\in H^1(du)$, is defined by
\begin{equation}
\label{EQphil2}
  dQ[X](V) = \IPh{DQ[X]}{V} = \IPl{-(DQ[X])_{uu} + DQ[X]}{V}\,,
\end{equation}
where by $(DQ[X])_{uu}$ we mean the second distributional derivative of $DQ[X]$, which is defined when paired with $V\in H^1(du)$ (by the equality above).
Differentiating distributionally, we have
\[
	-\IPl{X_{uu}}{V} = \IPh{DQ[X]}{V}
	\,.
\]
Applying the fundamental identity (Lemma \ref{LMfundiden}) we see that $DQ[X] = X_{uu}*\G$, as required.

\end{proof}

\begin{lemma}
\label{LMavar}
The $H^1(du)$-gradient of $A$ at $X\in H^1(du)$ is
\[
DA[X] = \DAX\,.
\]
\end{lemma}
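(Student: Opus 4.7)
My plan is to mirror the structure of the proof of Lemma \ref{LMqvar}: first compute the derivative $dA[X](V)$ as a linear functional on $H^1(du)$, then represent it as an $H^1(du)$-inner product with a gradient via the fundamental identity from Lemma \ref{LMfundiden}.

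First, I would fix $X, V \in H^1(du)$ and set $\hat X = X + tV$. Differentiating $A[\hat X]$ under the integral at $t=0$ gives
\[
dA[X](V) = -\frac12\int \bigl(V\cdot \Ro X_u + X\cdot \Ro V_u\bigr)\,du\,.
\]
Next I would symmetrise the two terms. The rotation $\Ro$ is skew, so $a\cdot \Ro b = -b\cdot \Ro a$ for all $a,b\in\R^2$. Integration by parts on $\S$ (justified by $X,V\in H^1(du)$, with no boundary terms because the domain is $\R/2\pi\Z$) converts $\int X\cdot \Ro V_u\,du$ into $-\int X_u\cdot \Ro V\,du = \int V\cdot \Ro X_u\,du$. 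Substituting back yields the clean expression
\[
dA[X](V) = -\int V\cdot \Ro X_u\,du = -\IPl{V}{\Ro X_u}\,.
\]

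Finally, to identify $DA[X]$ I apply Lemma \ref{LMfundiden} with $W = \Ro X_u \in L^2(du)$ (this is where the Greens function $\G$ enters): the lemma gives $\IPh{V}{\Ro X_u *\G} = -\IPl{V}{\Ro X_u} = dA[X](V)$ for every $V\in H^1(du)$. By the Riesz representation theorem this determines the $H^1(du)$-gradient uniquely, hence
\[
DA[X] = \Ro X_u * \G = \DAX\,.
\]

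There is no real obstacle here: both the first-variation computation and the application of Lemma \ref{LMfundiden} are routine. The only point requiring care is the symmetrisation step, where one must use both the skew-symmetry of $\Ro$ and integration by parts on $\S$; the density of smooth functions in $H^1(du)$ makes the IBP legitimate in the distributional sense used here, and this is precisely what allows the two terms of $dA$ to combine into a single $L^2(du)$-pairing of the form required by the fundamental identity.
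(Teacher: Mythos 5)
Your proof is correct and follows essentially the same route as the paper: compute $dA[X](V)$, use the skew-symmetry of $\Ro$ together with periodicity to combine the two terms into $-\IPl{\Ro X_u}{V}$, and then invoke Lemma \ref{LMfundiden} to identify the gradient as $\Ro X'*\G$. The only cosmetic difference is that you integrate by parts directly, while the paper packages the same manipulation as an exact derivative that vanishes upon integration over $\S$.
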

\begin{proof}
Using the same notation as above we find
\begin{align*}
	\frac{d}{dt} \Big(\hat X\cdot \Ro \hat X_u\Big)
	&=  V\cdot \Ro \hat X_u
	 + \hat X\cdot \Ro V_u
\\
	&=  V\cdot \Ro \hat X_u
	 - \hat X_u \cdot \Ro V
	 + (\hat X\cdot \Ro V)_u
\\
	&=  2V\cdot \Ro \hat X_u
	 + (\hat X\cdot \Ro V)_u
\end{align*}
and so
\[
	dA[X](V) = -\IPl{\Ro X_u}{V}
\,.
\]
The $H^1(du)$-gradient of $A$ at $X$, $DA[X]\in H^1(du)$ is defined by
\[
  dA[X](V) = \IPh{DA[X]}{V}\,. 
\]
That is,
\[
	-\IPl{\Ro X_{u}}{V} = \IPh{DA[X]}{V}
\]
so applying the fundamental identity (Lemma \ref{LMfundiden}) yields that $DA[X] = \Ro X'*\G$, as required.

%
\end{proof}

\begin{lemma}
\label{LMevar}
The $H^1(du)$-gradient of $E$ at $X\in\HoneC$ is
\[
DE[X] = A^{-1}[X](X'' - E[X]\Ro X')*\G
\,.
\]
\end{lemma}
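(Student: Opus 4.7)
The plan is to apply the quotient rule together with linearity of the Riesz representation, leveraging Lemmas~\ref{LMqvar} and \ref{LMavar}. Since $E = Q/A$, and $A[X] \neq 0$ for $X \in \HoneC$, by continuity $A$ is bounded away from zero in a neighbourhood of $X$, so $E$ is Frech\'et differentiable there with
\[
dE[X](V) = \frac{A[X]\, dQ[X](V) - Q[X]\, dA[X](V)}{A[X]^2} = \frac{1}{A[X]}\Big(dQ[X](V) - E[X]\, dA[X](V)\Big)
\]
for every $V \in H^1(du)$.

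Next I would substitute the first-variation formulas already computed. From the proof of Lemma~\ref{LMqvar} we have $dQ[X](V) = \IPl{X_u}{V_u}$ and from the proof of Lemma~\ref{LMavar} we have $dA[X](V) = -\IPl{\Ro X_u}{V}$. Combining,
\[
dE[X](V) = \frac{1}{A[X]}\Big(\IPl{X_u}{V_u} + E[X]\,\IPl{\Ro X_u}{V}\Big).
\]
By definition, $DE[X] \in H^1(du)$ is characterised by $dE[X](V) = \IPh{DE[X]}{V}$ for all $V \in H^1(du)$, so it remains to identify an element of $H^1(du)$ whose $H^1(du)$-pairing with $V$ reproduces the right-hand side.

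For this I would invoke the fundamental identity of Lemma~\ref{LMfundiden} in both of its forms: the first form applied to $W = -E[X]\Ro X_u \in L^2(du)$ yields $\IPh{V}{(-E[X]\Ro X_u)*\G} = E[X]\IPl{\Ro X_u}{V}$, and the second form applied to $W = X_u$ (so that $W_u = X''$ distributionally, paired against $V\in H^1(du)$) yields $\IPh{V}{X''*\G} = \IPl{V_u}{X_u}$. Adding these and dividing by $A[X]$ gives
\[
\IPh{V}{\tfrac{1}{A[X]}(X'' - E[X]\Ro X_u)*\G} = dE[X](V),
\]
so uniqueness of the Riesz representative identifies $DE[X]$ with the claimed expression. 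Equivalently, one could simply observe that the $H^1(du)$-gradient is linear, so $DE[X] = A[X]^{-1}(DQ[X] - E[X]\,DA[X])$ and apply Lemmas~\ref{LMqvar} and \ref{LMavar} directly.

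The only genuine subtlety, rather than an obstacle, is ensuring that $E$ really is differentiable as a map $\HoneC \to \R$; this reduces to checking that $Q$ and $A$ are $C^1$ on $H^1(du)$ (which follows from their being continuous bilinear/quadratic forms, hence smooth) together with $A[X] \neq 0$. The convolution $(X'' - E[X]\Ro X')*\G$ is interpreted via the second identity in Lemma~\ref{LMfundiden} (so that $X''*\G$ lies in $H^1(du)$ as already established in the proof of Lemma~\ref{LMqvar}), and $\Ro X' * \G \in H^1(du)$ as in the proof of Lemma~\ref{LMavar}, so $DE[X] \in H^1(du)$.
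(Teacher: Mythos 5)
Your proposal is correct and follows essentially the same route as the paper: quotient rule for $E=Q/A$, substitution of the first variations $dQ[X](V)=\IPl{X_u}{V_u}$ and $dA[X](V)=-\IPl{\Ro X_u}{V}$, and identification of the Riesz representative via Lemma~\ref{LMfundiden} (equivalently, via linearity $DE[X]=A^{-1}[X](DQ[X]-E[X]DA[X])$, which is exactly how the paper phrases it). No gaps.
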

\begin{proof}
Since $Q$ and $A$ are differentiable on $\HoneC$ we use Lemmata~\ref{LMqvar} and~\ref{LMavar} to calculate
\begin{align*}
	dE[X](V) 
	&= A^{-1}[X]dQ[X](V) - A^{-2}[X]Q[X]dA[X](V)
\\
	&= A^{-1}[X]\IPl{X_u}{V_u} - A^{-2}[X]Q[X]\IPl{-\Ro X_u}{V}
\\
	&= A^{-1}[X]\IPh{\DQX}{V} - A^{-1}[X]E[X]\IPh{\DAX}{V}
\\
	&= \IPh{A^{-1}[X](\DQX - E[X]\DAX)}{V}
	\,,
\end{align*}
so 
\[
DE[X] = A^{-1}[X](\DQX - E[X]\DAX)
 = A^{-1}[X](X'' - E[X]\Ro X')*\G
\,,
\]
as required.
\end{proof}

Finally let us note the following relationship between our energy $E$ and the isoperimetric ratio.

\begin{lemma}
For $X\in H^1(du)$ we have
\[
	E[X] \ge \I[X]\,.
\]
\label{LMeiso}
\end{lemma}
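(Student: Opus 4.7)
The plan is to show that the inequality $E[X] \ge \I[X]$ reduces, after unwinding the definitions, to a single application of the Cauchy--Schwarz inequality on $\S$.

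First I would assume without loss of generality that $A[X] \ne 0$, since otherwise $\I[X] = +\infty$ and there is nothing to prove (the case treated in the lemma is the one on which $E$ is finite, i.e.\ $A[X]>0$). Substituting the definitions
\[
E[X] = \frac{Q[X]}{A[X]} = \frac{\tfrac12\int|X_u|^2\,du}{A[X]}\,,\qquad \I[X] = \frac{L^2[X]}{4\pi |A[X]|}\,,
\]
and using $A[X]>0$, the claimed inequality $E[X] \ge \I[X]$ becomes
\[
2\pi \int |X_u|^2\,du \;\ge\; \left(\int |X_u|\,du\right)^{\!2}\,.
\]

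The key step is then simply to apply Cauchy--Schwarz to the pair of functions $1$ and $|X_u|$ in $L^2(du)$:
\[
\left(\int_0^{2\pi} |X_u|\cdot 1\,du\right)^{\!2} \;\le\; \left(\int_0^{2\pi} 1\,du\right)\left(\int_0^{2\pi} |X_u|^2\,du\right) = 2\pi \int_0^{2\pi} |X_u|^2\,du\,,
\]
which is exactly the required bound. This is valid because $X\in H^1(du)$ implies $|X_u|\in L^2(du)$, so both sides are finite.

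There is no real obstacle here; the statement is essentially a repackaging of the classical inequality $L^2 \le 4\pi Q$ on $\S$. The only mildly delicate point is to make sure the sign of $A[X]$ is handled correctly, since $\I$ involves $|A[X]|$ while $E$ involves $A[X]$ itself; this is why the inequality is only sharp, and only interesting, on $H^1_+(du)$, where $A[X]=|A[X]|$ and the proof above applies verbatim.
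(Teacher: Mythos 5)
Your proof is correct and is essentially the paper's own argument: the paper likewise reduces the claim to the H\"older (Cauchy--Schwarz) inequality $L^2[X] = \|X_u\|_{L^1(du)}^2 \le 2\pi\|X_u\|_{L^2(du)}^2 = 4\pi Q[X]$ and then divides by the area. Your added remark about the sign of $A[X]$ is a reasonable clarification (the inequality as literally stated needs $A[X]\ge 0$, which is the only regime in which the lemma is used), but the mathematical content is identical.
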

\begin{proof}
This is just the H\"older inequality:
\[
	L^2[X] = ||X_u||_{L^1(du)}^2 \le 2\pi||X_u||_{L^2(du)}^2 = 4\pi Q[X]
	\,.
\]
\end{proof}

\begin{remark}
We remark that although $DQ$ and $DA$ are linear, $DE$ is not.
Indeed, both $DE[aX] \ne a\,DE[X]$ and $DE[X+Y] \ne DE[X] + DE[Y]$.
The gradient $DE$ is homogeneous of degree $-1$, which implies that there is a space-time scaling that maps solutions to solutions for the flow.
This is natural; the degree of homogeneity for the gradient of an $\alpha$-homogeneous functional is $\alpha-1$.
So $DQ$ and $DA$ are $1$-homogeneous, and $DE$ is $(-1)$-homogeneous.
This space-time rescaling may have application to simulations, and to understanding more exotic evolutions than those discussed here (for instance, curves with initially negative energy).
\end{remark}


\section{Equilibria for the gradient flow}

A $H^1(du)$-gradient flow for $E[X]$ is a one-parameter family of
time-differentiable maps $X:\S\times[0,T)\rightarrow\R^2$ with $X(\cdot,t)\in
\HoneC$ satisfying
\begin{equation}
\label{EQflow}
	X_t = -DE[X]
	= -A^{-1}[X](X'' - E[X]\Ro X')*\G
	\,.
\end{equation}
This is a non-linear ordinary differential equation.

\begin{definition}
If $X$ is a one-parameter family of time-differentiable maps in $\HoneC$
satisfying \eqref{EQflow}, then we say that  $X$ is a {\it $H^1(du)$-gradient
flow for the area-normalised Dirichlet energy} or  a {\it $H^1(du)$-gradient
flow for $E$}.
\end{definition}

The equilibrium set for the $H^1(du)$-gradient flow for $E$ is
finite-dimensional and can furthermore be completely described.

\begin{proposition}
\label{PNequil}
The set of stationary solutions to the $H^1(du)$-gradient flow for $E$ is
\[
	\sS = \bigg\{Y:\S\rightarrow\R^2\,,
		Y(u)
	= \frac1\ell\begin{pmatrix} a\sin\ell u + b\cos\ell u - b\\ -a\cos\ell u + b\sin\ell u + a \end{pmatrix} + \begin{pmatrix} c \\ d \end{pmatrix}
, a,b,c,d\in\R, \ell\in\N\bigg\}
\,.
\]
Note that $\sS$ is a countable union of four-dimensional spaces.
\end{proposition}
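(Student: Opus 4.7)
The plan is to interpret $DE[X]=0$ as a classical linear ODE for $X$, solve it explicitly, and then impose $2\pi$-periodicity. Since $A[X]\neq 0$ on $\HoneC$, the formula from Lemma~\ref{LMevar} shows $DE[X]=0$ iff $(X''-E[X]\Ro X')*\G=0$. Pairing this against an arbitrary $V\in H^1(du)$ and invoking Lemma~\ref{LMfundiden} (exactly as in the proof of Lemma~\ref{LMevar}) reduces the stationary condition to the weak equation
\[
\IPl{X_u}{V_u}+E[X]\IPl{\Ro X_u}{V}=0\quad\text{for all }V\in H^1(du),
\]
which is $X_{uu}=E[X]\,\Ro X_u$ in the distributional sense. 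Since $E[X]$ is a constant in $u$ and $\Ro X_u\in L^2(du)$, this immediately gives $X\in H^2(du)$; bootstrapping via the 1D Sobolev embedding $H^1(du)\hookrightarrow C^0(\S)$ upgrades $X$ to $C^\infty(\S)$ and the equation holds pointwise.

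Next, I set $\ell:=E[X]$ and $Y=X_u$. The system $Y'=\ell\Ro Y$ has constant coefficients, and since $\Ro^2=-\id$ its fundamental matrix is $e^{\ell u\Ro}=\cos(\ell u)\,\id+\sin(\ell u)\,\Ro$. Writing $X_u(0)=(a,b)$, this gives $X_u(u)=(a\cos\ell u-b\sin\ell u,\;b\cos\ell u+a\sin\ell u)$; integrating from $0$ and setting $X(0)=(c,d)$ reproduces the explicit parametrisation given in $\sS$. Periodicity of $X$ rules out $\ell=0$ (which would force $X$ constant with $A[X]=0$, so $X\notin\HoneC$), and for $\ell\neq 0$ it forces $\ell\in\Z\setminus\{0\}$; restricting to the positive-area regime $A[X]>0$ then selects $\ell\in\N$. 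For the reverse inclusion, any $Y\in\sS$ has $|Y_u|^2=a^2+b^2$, so a short calculation yields $Q[Y]=\pi(a^2+b^2)$ and $A[Y]=\pi(a^2+b^2)/\ell$; hence $E[Y]=\ell$ and $Y_{uu}=E[Y]\Ro Y_u$, i.e.\ $DE[Y]=0$.

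The only point that really needs care is the self-consistency $\ell=E[X]$: the scalar appearing as the coefficient in the ODE must coincide with the energy of the solution it produces. This closes precisely through the $Q,A$ calculation in the reverse direction. The regularity bootstrap and the ODE integration are otherwise entirely routine.
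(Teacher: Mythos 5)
Your proof is correct and takes essentially the same route as the paper: reduce $DE[X]=0$ to the pointwise ODE $X_{uu}=E[X]\Ro X_u$ (you via the weak formulation and Lemma~\ref{LMfundiden}, the paper via the functional equation $\G_{uu}-\G=\delta$), bootstrap regularity, solve the constant-coefficient linear system, and quantise $\ell$ by periodicity. Your explicit converse check via $Q[Y]=\pi(a^2+b^2)$ and $A[Y]=\pi(a^2+b^2)/\ell$, closing the self-consistency $\ell=E[Y]$, is a welcome addition the paper leaves implicit, and your exclusion of negative $\ell$ by restricting to $A[X]>0$ is at least as defensible as the paper's own scaling argument at that step.
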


\begin{remark}
Proposition \ref{PNequil} implies quantisation of the energy to the natural numbers.
This means that the final energy level of the flow (assuming convergence)
starting at $X_0$ is in the set $\N\cap [1,E[X_0]]$.
\end{remark}

Qualitatively the equilibrium set consists of multiply-covered circles.
However the major advantage that this description does not make clear is that
the circles may choose their parametrisation from a finite-dimensional set.
We do not suffer from parametrisation invariance, and this makes the dynamics
of our flow more straightforward.

\begin{proof}
Suppose we have an equilibrium solution to \eqref{EQflow}, that is, a map $Y \in \HoneC$ satisfying
\[
	DE[Y]
	= A^{-1}[Y](Y'' - E[Y]\Ro Y')*\G = 0
\,.
\]
First, $|A[Y]| < \infty$ so
\begin{equation*}
	(Y'' - E[Y]\Ro Y')*\G = 0
	\,.
\end{equation*}
Using the functional equation for $\G$, we find
\begin{equation}
\label{EQphil3}
	Y = -Y*\G - E[Y]\Ro Y'*\G
	  = -(Y + E[Y]\Ro Y')*\G
	\,.
\end{equation}
Now differentiating (distributionally) and using the functional equation for $\G$ again, we have
\begin{equation}
\label{EQphil4}
	Y_{uu} = -Y - E[Y]\Ro Y_u - (Y + E[Y]\Ro Y')*\G
	\,.
\end{equation}
This shows that $Y_{uu}\in L^2(du)$; in fact, we may continue to gain
regularity without limit in this way.
Thus $Y$ is smooth and (substituting \eqref{EQphil3} into \eqref{EQphil4}) we have
\begin{equation}
\label{EQequil}
	Y_{uu} = E[Y]\Ro Y_u
	\,.
\end{equation}
From this equation we can learn several things about the equilibrium $Y$.
Suppose $Y(u) = (x(u), y(u))$ and set $\ell=E[Y]$; then
\[
	\frac{d}{du}
	\begin{pmatrix}
		x'(u)
\\
		y'(u)
	\end{pmatrix}
	=
	\begin{pmatrix}
		0 & -\ell
\\
		\ell & 0
	\end{pmatrix}
	\begin{pmatrix}
		x'(u)
\\
		y'(u)
	\end{pmatrix}
\]
Thus we may solve for the components as
\begin{equation*}
	\begin{pmatrix}
		x'(u)
\\
		y'(u)
	\end{pmatrix}
	=
	\text{exp}
	\begin{pmatrix}
		0 & -\ell u
    \\
		\ell u & 0
	\end{pmatrix}
	\begin{pmatrix}
		a
\\
		b
	\end{pmatrix}
	= 
	\begin{pmatrix}
		\cos(\ell u) & -\sin(\ell u)
\\
		\sin(\ell u) & \cos(\ell u)
	\end{pmatrix}
	\begin{pmatrix}
		a
\\
		b
	\end{pmatrix}
	= 
	\begin{pmatrix}
		a\cos(\ell u) - b\sin(\ell u)
\\
		a\sin(\ell u) + b\cos(\ell u)
	\end{pmatrix}
\end{equation*}
where $a,b$ are such that $Y'(0) = (a,b)$.
Note that this equation can also be written as
\[
	Y'(u) = \text{Rot}{\,}_{\ell u} \begin{pmatrix} a \\ b \end{pmatrix}
	\,.
\]
%
%
%
%
%
%
Since $\frac{d}{d\theta}\text{Rot}{\,}_{\theta} = \text{Rot}{\,}_{\theta+\frac\pi2}$,
we find
\[
	\bigg(\ell^{-1}\text{Rot}{\,}_{\ell u - \frac\pi2}\begin{pmatrix} a \\ b \end{pmatrix}\bigg)'
	= \text{Rot}{\,}_{\ell u} \begin{pmatrix} a \\ b \end{pmatrix}
\,.
\]
This implies (assuming that $Y(0) = (c,d)$) that
\begin{align*}
	Y(u) - \begin{pmatrix} c \\ d \end{pmatrix}
	&= \ell^{-1}\text{Rot}{\,}_{\ell u - \frac\pi2}\begin{pmatrix} a \\ b \end{pmatrix}
	- \ell^{-1}\text{Rot}{\,}_{ - \frac\pi2}\begin{pmatrix} a \\ b \end{pmatrix}
	= \ell^{-1}\begin{pmatrix} a\sin\ell u + b\cos\ell u \\ -a\cos\ell u + b\sin\ell u \end{pmatrix}
	- \ell^{-1}\begin{pmatrix} b \\ -a \end{pmatrix}\intertext{so}
	Y(u)
	&= \ell^{-1}\begin{pmatrix} a\sin\ell u + b\cos\ell u - b\\ -a\cos\ell u + b\sin\ell u + a \end{pmatrix} + \begin{pmatrix} c \\ d \end{pmatrix}
\,,
\end{align*}
as claimed.

Periodicity implies that $\ell = E[Y]$ is integer-valued, and $\ell \ne 0$ because $Y\not\in \SA$.
Furthermore, explicit calculation shows that
\[
	A[Y] = \frac{\pi}{\ell}(a^2 + b^2)
\,.
\]
In particular, the area of $Y$ and $\ell$ share the same sign.
Since the energy decreases (to negative $\infty$) by homothetically shrinking
any curve with negative area to a point, there do not exist equilibria with
negative area.
Therefore the area of $Y$ must be positive and so $\ell$ must be a positive integer.
\end{proof}

\begin{remark}
We did not use the isoperimetric inequality in the proof of
Proposition \ref{PNequil}.
\end{remark}



\section{Existence and uniqueness}

To show that a unique solution to \eqref{EQflow} exists, we use the Picard--Lindel\"of theorem.
We will not be able to study the flow where the initial map has zero area.
For such maps $X$, the energy $E[X]$ is unbounded (unless they are points, i.e.
constant maps) and so is the gradient $DE[X]$.
Furthermore, when the area of $X$ is negative, the energy $E[X]<0$ and the
descent flow will send $E[X(\cdot,t)]\rightarrow-\infty$.
This does not seem helpful from the perspective of isoperimetry, although
mathematically it does appear interesting.

Thus we subtract maps with non-positive area from the space $H^1(du)$, and seek
to prove existence on the space $H_+^1(du)$ where
\[
	H^1_+(du) = \{X\in H^1(du)\,:\,A[X] > 0\}\,.
\]
Note that $\HoneC \supset H^1_+(du)$, so the work in the previous sections
applies here.

The isoperimetric inequality is trivial for $X\in\SA$ (when the area vanishes),
and length is invariant under change of orientation (reparametrisation sending
$u\mapsto 2\pi-u$) whereas area changes sign.
So for the practical purpose of establishing parametrised isoperimetric
inequalities for all $X\in H^1(du)$ (Theorem~\ref{TMparamiso}), considering the
$H^1(du)$-gradient flow only on the space $H_+^1(du)$ is not a significant
restriction.

We require the following lemma.

\begin{lemma}\label{blip}
Given 
$X_0\in H^1_+(du)$ let
\begin{align*}
Q_b
 &= \bigg\{X\in H^1_+(du)\ :\  
\\
&\qquad\qquad {\Vert X-X_0\Vert}_\hh1^2 \leq b^2 
		\le \min\Big\{\frac12\h{X_0}^2, \frac{A^2[X_0]}{8||X_0||_{H^1(du)}^2}\Big\} \bigg\}
\end{align*}
where $b>0$ is fixed. 
Then there exist constants $J\geq 0$ and $K>0$ depending only on $b$ and $\h{X_0}$ such that 
\begin{align*}
	\norm{DE[X]}_\hh1  &< K, && \text{for all} \,\, X\in Q_b\, , \\
\norm{DE[X] - DE[Y]}_\hh1  &\leq J \norm{X-Y}_\hh1, && \text{for all}\, \, X,Y\in Q_b\,.
\end{align*}
\end{lemma}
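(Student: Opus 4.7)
The plan is to treat the two claims as corollaries of three auxiliary facts about $Q_b$: (i) a uniform upper bound on $\h{X}$, (ii) a uniform positive lower bound on $A[X]$ (hence an upper bound on $E[X]$), and (iii) uniform Lipschitz continuity of $Q$, $A$ and $X\mapsto X''*\G$, $X\mapsto \Ro X'*\G$ on the ball. All estimates boil down to the two elementary $L^2\to H^1$ bounds
\[
\h{W*\G}\le \l{W}\,,\qquad \h{W_u*\G}\le \l{W}\,,
\]
which follow from Lemma~\ref{LMfundiden} by testing against $V=W*\G$ and $V=W_u*\G$ respectively and applying Cauchy--Schwarz.

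For (i), since $\h{X-X_0}\le b\le \tfrac1{\sqrt2}\h{X_0}$, the triangle inequality gives $\h{X}\le \bigl(1+\tfrac1{\sqrt2}\bigr)\h{X_0}$, so $\l{X_u}\le \h{X}$ and $Q[X]\le \tfrac12\h{X}^2$ are bounded in terms of $\h{X_0}$. For (ii), I expand $A[X]-A[X_0]$ bilinearly,
\[
A[X]-A[X_0]=-\tfrac12\int (X-X_0)\cdot \Ro X_u\,du -\tfrac12\int X_0\cdot \Ro (X-X_0)_u\,du,
\]
and estimate by Cauchy--Schwarz to obtain $|A[X]-A[X_0]|\le \tfrac{b}{2}(\h{X}+\h{X_0})\le Cb\,\h{X_0}$. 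The upper bound $b^2\le A^2[X_0]/(8\h{X_0}^2)$ is chosen precisely so that this quantity does not exceed $\tfrac12 A[X_0]$, which yields $A[X]\ge \tfrac12 A[X_0]>0$ and the corresponding upper bound $E[X]=Q[X]/A[X]\le 2Q[X]/A[X_0]$, uniform on $Q_b$. Feeding these into Lemma~\ref{LMevar} together with the two $*\G$ estimates gives
\[
\h{DE[X]}\le A^{-1}[X]\bigl(\l{X_u}+E[X]\l{X_u}\bigr)\le K,
\]
which is the first claim.

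For the Lipschitz claim I write, for $X,Y\in Q_b$,
\[
DE[X]-DE[Y]=\tfrac{1}{A[X]}(X''-Y'')*\G+\bigl(\tfrac{1}{A[X]}-\tfrac{1}{A[Y]}\bigr)Y''*\G -\tfrac{E[X]}{A[X]}\Ro(X'-Y')*\G -\bigl(\tfrac{E[X]}{A[X]}-\tfrac{E[Y]}{A[Y]}\bigr)\Ro Y'*\G,
\]
and estimate each summand separately. The first and third use $\h{(X-Y)_u*\G}\le \h{X-Y}$ together with the uniform bounds on $A^{-1}[X]$ and $E[X]/A[X]$. The second and fourth require Lipschitz control of $A^{-1}$ and of $E/A=Q/A^2$ on $Q_b$, which follow from the elementary bounds $|A[X]-A[Y]|\le \tfrac12(\h{X}+\h{Y})\h{X-Y}$ (same bilinear computation as for (ii), with $X_0$ replaced by $Y$) and $|Q[X]-Q[Y]|\le \tfrac12(\h{X}+\h{Y})\h{X-Y}$, combined with the uniform lower bound on $A$ from (ii). Since $Y''*\G$ and $\Ro Y'*\G$ are bounded in $H^1(du)$ by $\l{Y_u}$, all four terms are majorised by $J\h{X-Y}$ for some $J$ depending only on $b$ and $\h{X_0}$.

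The only mildly delicate point is the second step, where the choice of the constants $\tfrac12$ and $\tfrac18$ in the definition of $Q_b$ must be tracked to ensure the lower bound $A[X]\ge \tfrac12 A[X_0]$ actually closes. Once this is secured, everything else is a matter of combining the $*\G$ inequalities with the bilinear Lipschitz estimates for $A$ and $Q$, so I do not anticipate further obstacles.
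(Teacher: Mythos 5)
Your proposal is correct and follows essentially the same route as the paper: the area lower bound $A[X]\ge\tfrac12 A[X_0]$ from the bilinear estimate and the constraint on $b$, the resulting uniform energy bound, the $L^2\to H^1$ smoothing estimates for convolution with $\G$ via Lemma~\ref{LMfundiden}, and Lipschitz bounds for $A$ and $Q$ assembled into a Lipschitz bound for $DE$. The only cosmetic differences are that the paper rewrites $X''*\G = X + X*\G$ via the fundamental-solution property before estimating, and phrases the final step as ``products of uniformly bounded Lipschitz maps are Lipschitz'' rather than writing out your four-term decomposition explicitly; your constant-tracking in the area step does close, as you anticipated.
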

\begin{proof}
To obtain the estimates and remain away from the problematic set (maps with
non-positive area) it is necessary that $A[X]$ for each $X\in Q_b$ is bounded
away from zero.
This is the reason for the upper bound on $b$.
For $X\in Q_b$
 we calculate
\begin{align*}
	|A[X] - A[X_0]|
	&= \bigg|-\frac12 \int X\cdot\Ro X_u - X_0\cdot \Ro (X_0)_u\,du \bigg|
\\
	&= \bigg|-\frac12 \int (X-X_0)\cdot\Ro X_u + (\Ro X_u-\Ro (X_0)_u)\cdot X_0\,du \bigg|
\\
	&\le \frac12 ||X-X_0||_{L^2(du)} ||X_u||_{L^2(du)}
\\&\quad
		 + \frac12 ||X_u-(X_0)_u||_{L^2(du)} ||X_0||_{L^2(du)}
\end{align*}
so
\begin{align*}
2|A[X]-A[X_0]|^2 
&\le \h{X-X_0}^2(\h{X}^2+\h{X_0}^2)
\\
&\le \h{X-X_0}^2(2\h{X-X_0}^2 + 3\h{X_0}^2)
\\
&\le b^2(2b^2 + 3\h{X_0}^2)
\,.
\end{align*}
Using the upper bound on $b$, we find
\begin{align*}
	|A[X] - A[X_0]|^2
&\le \frac18\frac{A^2[X_0]}{\h{X_0}^2}(\frac{1}{2}\h{X_0}^2 + \frac{3}{2}\h{X_0}^2)
\\
&= \frac{A^2[X_0]}{4}
\,.
\end{align*}
The final estimate is
\begin{equation}
\label{EQareadiff}
\frac12A[X_0] \le A[X] < \frac32A[X_0]
\,,
\end{equation}
for all $X\in Q_b$.

Recall that $\G$ is given by
\[
	\G(u,\hat u) = -\frac{\cosh(|u-\hat u| - \pi)}{2\sinh\pi}\,.
\]
Direct calculation yields $\G(u,\hat u) = -\G(\hat u, u)$, $|\G(u,\hat u)| \le
\frac12\coth\pi$ and $|\G_u(u,\hat u)| \le \frac12$.
Using Lemma \ref{LMfundiden}, we have
\begin{align*}
\h{X*\G}^2
 &= \IPh{X*\G}{X*\G}
 = \IPl{X*\G}{-X}
\\
 &\le \frac12\h{X*\G}^2 + \frac12\l{X}^2
\end{align*}
so
\begin{equation}
\label{EQh1tol2}
\h{X*\G}^2
\le \l{X}^2\,.
\end{equation}

The gradient is
\[
	DE[X] = A^{-1}[X](X'' - E[X]\Ro X')*\G\,.
\]
Using the fundamental solution property of $\G$ we find
\[
	DE[X] = \frac{X}{A[X]} + A^{-1}[X](X - E[X]\Ro X')*\G\,.
\]
Thus, applying \eqref{EQareadiff}, \eqref{EQh1tol2} and using the 
bound for $|\G_u|$, we estimate
\begin{align*}
\h{DE[X]}
	&\le 
	\frac2{A[X_0]}\Big(
		\h{X} +
		\h{(X - E[X]\Ro X_u)*\G}
	\Big)
\\
	&\le 
	\frac2{A[X_0]}\Big(
		\h{X} + \h{X*\G} +
		E[X]\h{\Ro X*\G_u}
	\Big)
\\
	&\le 
	\frac2{A[X_0]}\Big(
		2\h{X} +
		E[X]\h{\Ro X*\G_u}
	\Big)
\\
	&\le 
	\frac{(4+E[X])\h{X}}{A[X_0]}
\,.
\end{align*}
Now
\begin{equation}
\label{EQinqbh1est}
\h{X} \le \h{X-X_0} + \h{X_0} \le b + \h{X_0}\,,
\end{equation}
so we have the energy bound
\begin{equation}
\label{EQenergyonq}
E[X] = \frac{Q[X]}{A[X]} \le \frac{\frac12\h{X}^2}{\frac12 A[X_0]}
 \le \frac{(b + \h{X_0})^2}{A[X_0]}
\,.
\end{equation}
Thus (using \eqref{EQinqbh1est}, \eqref{EQenergyonq}) we find
\begin{equation}
\label{fboundh1}
\h{DE[X]} \le 
	\bigg(4+\frac{(b+\h{X_0})^2}{A[X_0]}\bigg)\frac{b + \h{X_0}}{A[X_0]}
	=: K
\end{equation}
for all $X\in Q_b$. 
 
It thus remains to establish the Lipschitz estimate for $DE$.
We will use the fact that Lipschitz functions
form an associative algebra and show that individually the area functional $A$,
the energy $E$, and then the maps $X\mapsto X''*\G$, $X\mapsto \Ro X' *\G$ are all
uniformly Lipschitz (in the $H^1(du)$ metric space) on $Q_b$.
This will imply (together with the uniform boundedness of each of $A$, $E$, and
the map $X\mapsto (X'' - \Ro X')*\G$) that $DE$ is Lipschitz on $Q_b$ with
a uniform constant.

{\it Uniform boundedness on $Q_b$.}
The area is uniformly bounded on $Q_b$ from \eqref{EQareawelldef} and
\eqref{EQinqbh1est}, and similarly the energy is clearly uniformly bounded by
\eqref{EQenergyonq}.
A calculation similar to the one used to prove \eqref{fboundh1} establishes
uniform boundedness also of the maps
 $X\mapsto X''*\G$,
 $X\mapsto \Ro X' *\G$; indeed we have the estimates
\begin{align*}
 \h{X''*\G}
&= \h{X + X*\G}
\\
&\le \h{X} + \h{X*\G}
\\
&\le \h{X} + \l{X}
\le 2\h{X} 
\\
&\le 2(b + \h{X_0})
\end{align*}
and
\begin{align*}
 \h{\Ro X' *\G}
&\le \l{\Ro X_u} = \l{X_u}
\\
&\le \h{X} \le b + \h{X_0}
\,.
\end{align*}

{\it Area is Lipschitz on $Q_b$.} This step is completed already, essentially,
see the proof of the estimate \eqref{EQareadiff}.
We find
\begin{align*}
\Big| A[X] - A[Y] \Big|^2
	&\le \frac12\h{X-Y}^2(\h{X}^2 + \h{Y}^2)
\\
	&\le \h{X-Y}^2(b + \h{X_0})^2
\,.
\end{align*}
So on $Q_b$ the area functional is uniformly Lipschitz with constant $(b+\h{X_0})$.

{\it Dirichlet energy.} We calculate
\begin{align*}
\Big|Q[X] - Q[Y]\Big|
	&\le \frac12
	        \bigg|\int |X_u|^2-|Y_u|^2\,du\bigg|
	 = \frac12
	        \bigg|\int (X_u-Y_u)\cdot(X_u+Y_u)\,du\bigg|
\\
	&\le \frac12||X_u-Y_u||_{L^2(du)}\l{X_u + Y_u}
\\
	&\le \frac12||X_u-Y_u||_{L^2(du)}(\h{X} + \h{Y})
\\
	&\le ||X_u-Y_u||_{L^2(du)}(b+\h{X_0})
\,.
\end{align*}
So $Q$ is uniformly Lipschitz with constant $(b+\h{X_0})$.

{\it Energy.} As $E[X] = Q[X]/A[X]$ and $A[X] \ge \frac12A[X_0]$ uniformly, we
also have that $E$ is uniformly Lipschitz.

{\it The maps $X\mapsto X''*\G$ and $X\mapsto \Ro X'*\G$.}
These are bounded linear maps and so they are automatically Lipschitz.

{\it Lipschitz property for $DE$.}
Therefore on $Q_b$, as $DE$ consists of products of uniformly Lipschitz,
bounded maps, it is itself also a uniformly Lipschitz bounded map on $Q_b$.
\end{proof}

Now the theorem follows.

\begin{theorem}\label{TMSTE}
For each $X_0\in H^1_+(du)$ there exists a $T_0>0$ and a unique
map $X\in C^1([-T_0,T_0]; H^1_+(du))$ such that $X(\cdot,0) = X_0$ and $X$ is a
$H^1(du)$-gradient flow for $E$.
\end{theorem}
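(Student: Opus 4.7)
The plan is to apply the Banach-space Picard--Lindel\"of theorem to the ODE $X_t = -DE[X]$ in the Hilbert space $H^1(du)$. Lemma~\ref{blip} already performs the key analytic work: it furnishes both a uniform bound $K$ on $\|DE[X]\|_{H^1(du)}$ and a Lipschitz constant $J$ for $DE$ on the closed ball $Q_b$, provided $b$ is chosen to satisfy the stated upper bound. The intermediate estimate \eqref{EQareadiff} then guarantees $A[X] \ge \tfrac12 A[X_0] > 0$ for every $X\in Q_b$, so in particular $Q_b \subset H^1_+(du)$ and $DE$ is well-defined on $Q_b$.

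Given this, I would fix such a $b$ and choose $T_0 = \min\{b/(2K),\,1/(2J)\}$. I would then consider the complete metric space
\[
	\M = \bigl\{X\in C^0([-T_0,T_0];H^1(du)) : X(t)\in Q_b \text{ for all } |t|\le T_0\bigr\}
\]
equipped with the sup norm $\|X\|_\infty := \sup_{|t|\le T_0}\|X(t)\|_{H^1(du)}$, together with the Picard operator
\[
	\Phi[X](t) = X_0 - \int_0^t DE[X(s)]\,ds.
\]
The uniform bound gives $\|\Phi[X](t) - X_0\|_{H^1(du)} \le KT_0 \le b/2$, so $\Phi$ sends $\M$ into itself; the Lipschitz bound yields $\|\Phi[X] - \Phi[Y]\|_\infty \le JT_0\|X-Y\|_\infty \le \tfrac12 \|X-Y\|_\infty$. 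Banach's fixed point theorem then produces a unique fixed point $X\in\M$. Since $t\mapsto DE[X(t)]$ is continuous (by the Lipschitz property), the integral formula upgrades $X$ to a map in $C^1([-T_0,T_0];H^1(du))$ satisfying $X_t = -DE[X]$ with $X(\cdot,0) = X_0$, and membership in $H^1_+(du)$ is inherited from $Q_b$.

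Uniqueness in the class stated in the theorem is standard: any other $C^1$ solution $\tilde X$ with the same initial data is continuous into $H^1(du)$ and so remains in some $Q_{b'}$ for short enough times, and a Gronwall estimate based on the Lipschitz constant forces $\tilde X\equiv X$ on that interval, which then propagates. The only subtlety is keeping the solution inside $H^1_+(du)$ so that $DE$ remains defined and Lipschitz; this is the main (if mild) obstacle, and it is precisely what the upper bound on $b$ in Lemma~\ref{blip} was engineered to secure via the area estimate \eqref{EQareadiff}.
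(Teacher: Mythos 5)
Your proposal is correct and is essentially the paper's argument: the paper simply invokes the Banach-space Picard--Lindel\"of theorem (Zeidler, Theorem~3.A) with the bound $K$ and Lipschitz constant $J$ from Lemma~\ref{blip} under the condition $KT_0<b$, whereas you write out the standard contraction-mapping proof of that theorem explicitly. The substance --- using the area lower bound \eqref{EQareadiff} to keep $Q_b\subset H^1_+(du)$ so that $DE$ is defined, bounded, and Lipschitz there --- is identical.
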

\begin{proof}
Using \cite[Theorem 3.A]{Zeidler:1986aa}, the estimates in Lemma \ref{blip}
guarantee existence and uniqueness of a solution on the interval provided
 $KT_0 < b$.
\end{proof}



\section{Global control}

In this section we show how a number of remarkable coincidences come together
to give powerful control over any $H^1(du)$-gradient flow for $E$ on
$H^1_+(du)$.


\begin{proposition}
\label{PNmiracle}
Let $X:\S\times[0,T)\rightarrow\R^2$ be a $H^1(du)$-gradient flow for $E$ on
$H^1_+(du)$.
Then $||X(\cdot,t)||_{H^1(du)} = ||X(\cdot,0)||_{H^1(du)}$.
\end{proposition}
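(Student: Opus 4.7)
The plan is to exploit the scale invariance of $E$. Since $Q$ is homogeneous of degree $2$ (being a quadratic form in $X_u$) and $A$ is homogeneous of degree $2$ (being a bilinear form in $X, X_u$), the quotient $E = Q/A$ is homogeneous of degree $0$. By Euler's identity for homogeneous functions, this forces the directional derivative of $E$ at $X$ in the radial direction $X$ itself to vanish: $dE[X](X) = 0$.

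I would begin by computing
\[
\frac{d}{dt}\tfrac12\h{X}^2 = \IPh{X}{X_t} = -\IPh{X}{DE[X]}.
\]
By the defining property of the $H^1(du)$-gradient, $\IPh{X}{DE[X]} = dE[X](X)$. Using the computation of $dE[X](V)$ from the proof of Lemma~\ref{LMevar},
\[
dE[X](X) = A^{-1}[X]\,dQ[X](X) - A^{-2}[X]\,Q[X]\,dA[X](X),
\]
and I would evaluate the two terms directly: $dQ[X](X) = \IPl{X_u}{X_u} = 2Q[X]$, and $dA[X](X) = -\IPl{\Ro X_u}{X} = 2A[X]$. Substituting these yields
\[
dE[X](X) = A^{-1}[X]\cdot 2Q[X] - A^{-2}[X]\,Q[X]\cdot 2A[X] = 2E[X] - 2E[X] = 0.
\]
Hence $\tfrac{d}{dt}\h{X}^2 \equiv 0$ on $[0,T)$, which gives the claim.

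There is essentially no obstacle here; the only thing to check is that the computation is valid, i.e.\ that $X(\cdot,t)\in H^1_+(du)$ throughout so that $A[X]>0$ and $E[X]$ is finite (this is guaranteed by the hypothesis). Conceptually the statement is a manifestation of the fact that the $H^1(du)$-gradient of a $0$-homogeneous functional is pointwise $H^1(du)$-orthogonal to the position vector $X$, so that the radial component of the flow vanishes identically and the norm is a conserved quantity.
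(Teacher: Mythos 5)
Your proof is correct and is essentially the same computation as the paper's: the paper plugs in the explicit formula $DE[X]=A^{-1}[X](X''-E[X]\Ro X')*\G$ and uses Lemma~\ref{LMfundiden} to reduce the $H^1(du)$ pairings back to $\IPl{X_u}{X_u}=2Q[X]$ and $\IPl{X}{\Ro X_u}=-2A[X]$, which is exactly your evaluation of $dQ[X](X)$ and $dA[X](X)$. Your packaging via $\IPh{X}{DE[X]}=dE[X](X)$ and the Euler identity for the $0$-homogeneous functional $E$ is a clean conceptual gloss on the same cancellation.
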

\begin{proof}
We calculate:
\begin{align*}
\frac{d}{dt} ||X(\cdot,t)||_{H^1(du)}^2
	&= 2\IPh{X}{X_t}
	 = -2\IPh{X}{DE[X]}
\\
	&= -2A^{-1}[X] \IPh{X}{( X'' - E[X]\Ro X')*\G}
\\
	&= -2A^{-1}[X] \IPh{X}{X''*\G}
	   +2E[X]A^{-1}[X] \IPh{X}{\Ro X'*\G}
\,.
\end{align*}
Now applying Lemma \ref{LMfundiden} to both terms, we find:
\begin{align*}
\frac{d}{dt} ||X(\cdot,t)||_{H^1(du)}^2
	&= -2A^{-1}[X] \IPl{X_u}{X_{u}}
	  - 2\frac{E[X]}{A[X]} \IPl{X}{\Ro X_u}
\,.
\end{align*}
The inner product in the first term is twice the Dirichlet energy
$Q[X]$ and the inner product in the second term is $-2A[X]$.
This allows us to simplify:
\begin{align*}
\frac{d}{dt} ||X(\cdot,t)||_{H^1(du)}^2
	&= -4\frac{Q[X]}{A[X]}
	  - 2\frac{Q[X]}{A^2[X]} (-2A[X])
	= 0\,.
\end{align*}
\end{proof}

Thus the flow is constrained to an infinite dimensional sphere in $H^1(du)$.
The next proposition shows that the centre of the flow is preserved.
Below we use an overline to denote the average, so
\[
\overline{X}(t) = \frac1{2\pi}\int X\,du
\,.
\]

\begin{proposition}
\label{PNmiracle2}
Let $X:\S\times[0,T)\rightarrow\R^2$ be a $H^1(du)$-gradient flow for $E$ on
$H^1_+(du)$.
Then $\overline{X}(t) = \overline{X}(0)$.
\end{proposition}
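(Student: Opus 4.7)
The plan is to reduce the statement to showing $\int DE[X]\,du = 0$, and then to extract this from the translation invariance of the energy $E$.

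First I would differentiate under the integral:
\[
\frac{d}{dt}\overline{X}(t) = \frac{1}{2\pi}\int X_t\,du = -\frac{1}{2\pi}\int DE[X]\,du,
\]
so the proposition reduces to the claim that $\int DE[X]\,du = 0$ pointwise in time.

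The cleanest way to get this is to test the gradient against constant vectors. For any fixed $c\in\R^2$, since $(X+tc)_u = X_u$ we have $Q[X+tc] = Q[X]$. For the area,
\[
A[X+tc] = A[X] - \tfrac{t}{2}\int c\cdot\Ro X_u\,du - \tfrac{t}{2}\int X\cdot\Ro c\,du + O(t^2),
\]
and the first integral vanishes by periodicity of $X$ (indeed $\int X_u\,du = 0$), while the second likewise vanishes after integrating by parts using periodicity. Hence $E[X+tc] = E[X]$ for all $t$, so $dE[X](c) = 0$. By definition of the $H^1(du)$-gradient,
\[
0 = dE[X](c) = \IPh{DE[X]}{c} = \int DE[X]\cdot c\,du + \int (DE[X])_u\cdot c_u\,du = \int DE[X]\cdot c\,du,
\]
since $c_u = 0$. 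Because $c\in\R^2$ is arbitrary this forces $\int DE[X]\,du = 0$, completing the argument.

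I do not foresee a real obstacle: the computation is structurally the same as the proof of Proposition~\ref{PNmiracle}, replacing the test map $X$ by a constant and exploiting that constants lie in the kernel of $dE[X]$. The only mild subtlety is confirming translation invariance of $A$, which rests entirely on the periodicity of $X$ built into $H^1(du)$. As an alternative verification one could compute $\int DE[X]\,du$ directly from the formula $DE[X] = A^{-1}[X](X''-E[X]\Ro X')*\G$ by Fubini, noting that $X''$ and $\Ro X'$ have zero mean by periodicity and that $\int \G(u,\hat u)\,du$ is independent of $\hat u$; but the variational argument above is shorter and parallels the style already established in the paper.
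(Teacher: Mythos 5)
Your proof is correct, but it takes a different route from the paper. The paper computes $\int DE[X]\,du$ directly from the explicit formula $DE[X]=A^{-1}[X](X''-E[X]\Ro X')*\G$: using $\G_{\hat u}=-\G_u$ and integration by parts, the integrand is rewritten as the exact derivative $\big((X'-E[X]\Ro X)*\G\big)_u$ of a periodic function, so its integral vanishes. You instead argue by symmetry: $E$ is invariant under translations $X\mapsto X+tc$, so constants lie in the kernel of $dE[X]$, and since constants have vanishing derivative the $H^1(du)$ pairing $\IPh{DE[X]}{c}$ collapses to $\int DE[X]\cdot c\,du$, forcing the mean of the gradient to vanish. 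Your argument is more conceptual (a Noether-type statement: translation invariance of the energy implies conservation of the centre of mass under the gradient flow, for \emph{any} choice of translation-invariant metric whose pairing against constants reduces to the $L^2$ one), whereas the paper's is a one-line computation tied to the specific form of $DE$; both are sound. One small slip: your expansion of $A[X+tc]$ should not contain the term $-\tfrac{t}{2}\int X\cdot\Ro c\,du$ (nor an $O(t^2)$ remainder) --- since $(X+tc)_u=X_u$, the only correction is $-\tfrac{t}{2}\int c\cdot\Ro X_u\,du$, which vanishes because $\int X_u\,du=0$ by periodicity. As written, the spurious term equals $-\pi t\,\overline{X}\cdot\Ro c$ and does \emph{not} vanish by integration by parts; fortunately it simply should not appear, so the conclusion $A[X+tc]=A[X]$, and hence $dA[X](c)=0$ (consistent with Lemma~\ref{LMavar}, which gives $dA[X](c)=-\IPl{\Ro X_u}{c}=0$), stands.
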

\begin{proof}
We calculate using integration by parts and $\G_{\hat u} = -\G_u$
\begin{align*}
\frac{d}{dt}\overline{X}(t)
	&= -\frac1{2\pi} \int DE[X(\cdot,t)]\,du
\\
	&= -\frac1{2\pi}
		\int -A^{-1}[X] ( X'' - E[X]\Ro X')*\G\,du
\\
	&= \frac1{2\pi A[X]}
		\int ((X' - E[X]\Ro X)*\G)_u\,du
	= 0\,.
\end{align*}
\end{proof}

Using these we obtain the following.

\begin{proposition}
\label{PNmiracle3}
Let $X:\S\times[0,T)\rightarrow\R^2$ be a $H^1(du)$-gradient flow for $E$ on
$H^1_+(du)$.
Then $||X(\cdot,t)-\overline{X}(t)||_{H^1(du)} =
||X(\cdot,0)-\overline{X}(0)||_{H^1(du)}$.
\end{proposition}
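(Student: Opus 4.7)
The plan is to reduce Proposition~\ref{PNmiracle3} to a direct combination of the two previous conservation laws, Proposition~\ref{PNmiracle} (constancy of $\|X\|_{H^1(du)}$) and Proposition~\ref{PNmiracle2} (constancy of $\overline{X}$), via an algebraic identity relating these three quantities. Essentially no new analysis of the flow equation should be needed.

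First, I would expand the squared $H^1(du)$ norm of $X - \overline{X}$. Since $\overline{X}(t)$ is a constant vector in $u$, its $u$-derivative vanishes, so
\[
\|X - \overline{X}\|_{H^1(du)}^2
= \int |X - \overline{X}|^2\,du + \int |X_u|^2\,du.
\]
Next I would expand $\int |X-\overline{X}|^2\,du = \int |X|^2\,du - 2\int X\cdot \overline{X}\,du + \int |\overline{X}|^2\,du$ and use that $\overline{X}$ is constant in $u$ together with the definition $\int X\,du = 2\pi\,\overline{X}$ to rewrite the cross term and the last term as $2\pi|\overline{X}|^2$ each. This yields the key identity
\[
\|X - \overline{X}\|_{H^1(du)}^2 = \|X\|_{H^1(du)}^2 - 2\pi\,|\overline{X}|^2.
\]

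Finally, I would invoke Proposition~\ref{PNmiracle} to conclude that the first term on the right is independent of $t$, and Proposition~\ref{PNmiracle2} to conclude that $|\overline{X}(t)| = |\overline{X}(0)|$, so the second term is also independent of $t$. Evaluating at $t$ and at $0$ and comparing finishes the argument.

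There is no real obstacle here: the only point that requires any care is being explicit that $\overline{X}$ is a constant function of $u$, so that $\overline{X}_u \equiv 0$ and the mixed $L^2(du)$ integrals simplify cleanly. Everything else is a one-line consequence of the preceding two propositions.
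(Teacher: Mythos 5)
Your proof is correct. The identity
\[
\|X-\overline{X}\|_{H^1(du)}^2=\|X\|_{H^1(du)}^2-2\pi\,|\overline{X}|^2
\]
checks out (the cross term contributes $-4\pi|\overline{X}|^2$ and the square of the mean contributes $+2\pi|\overline{X}|^2$, and $\overline{X}_u\equiv 0$ kills the derivative part), so constancy of $\|X-\overline{X}\|_{H^1(du)}$ follows at once from Propositions~\ref{PNmiracle} and~\ref{PNmiracle2}. The paper instead differentiates $\|X-\overline{X}\|_{H^1(du)}^2$ in time, splits the pairing $\IPh{X-\overline{X}}{DE[X]}$ into the $\IPh{X}{DE[X]}$ piece (which vanishes by the computation in Proposition~\ref{PNmiracle}) and the $\IPh{\overline{X}}{DE[X]}$ piece (which vanishes because $\int DE[X]\,du=0$, the computation in Proposition~\ref{PNmiracle2}); so it leans on exactly the same two facts, but re-enters the flow equation and the Green's-function identities to do so. Your route is the cleaner one: it uses the two prior propositions purely as black-box conservation laws through a static algebraic identity, with no further appeal to the evolution equation or to Lemma~\ref{LMfundiden}. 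The only caveat is that your argument genuinely requires Proposition~\ref{PNmiracle2} in full (constancy of the vector $\overline{X}$, or at least of $|\overline{X}|$), whereas the paper's version only needs $\int DE[X]\,du=0$ pointwise in time — but since that is precisely how Proposition~\ref{PNmiracle2} is proved, nothing is lost.
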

\begin{proof}
Using the previous results we explicitly calculate
\begin{align*}
\frac{d}{dt}&||X(\cdot,t)-\overline{X}(t)||_{H^1(du)}^2
\\
	&= -2\IPh{X-\overline{X}}{DE[X]}
\\
	&= -2A^{-1}[X] \IPh{X-\overline{X}}{( X'' - E[X]\Ro X')*\G}
\\
	&= 0 + 2A^{-1}[X] \IPh{\overline{X}}{( X'' - E[X]\Ro X')*\G}
\\
	&= 2A^{-1}[X] \IPl{\overline{X}}{((X' - E[X]\Ro X)*\G)_u}
	= 0\,.
\end{align*}
\end{proof}

Next, we establish a lower bound for the Dirichlet energy $Q[X(\cdot,t)]$ along
the flow.

\begin{proposition}
\label{PNmiracle4}
Let $X:\S\times[0,T)\rightarrow\R^2$ be a $H^1(du)$-gradient flow for $E$ on
$H^1_+(du)$.
Then 
\[
Q[X(\cdot,t)] \ge \frac{c_0}{4}
\]
where $c_0 = ||X(\cdot,0)-\overline{X}(0)||_{H^1(du)}^2 > 0$.
\end{proposition}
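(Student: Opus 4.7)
The plan is to combine the conservation law proved in Proposition~\ref{PNmiracle3} with the Poincar\'e (Wirtinger) inequality on $\S$ to lower bound $Q[X]$ in terms of the (conserved) $H^1(du)$-norm of $X(\cdot,t)-\overline X(t)$.

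First I would record the following decomposition of the $H^1(du)$ norm. For any $Y\in H^1(du)$ with zero average one has
\[
	\h{Y}^2 = \l{Y}^2 + \l{Y_u}^2\,.
\]
Applying this to $Y = X(\cdot,t) - \overline X(t)$ (which has zero mean by construction and has the same weak derivative $X_u$ as $X$) gives
\[
	\|X(\cdot,t)-\overline X(t)\|_{H^1(du)}^2
	 = \|X(\cdot,t)-\overline X(t)\|_{L^2(du)}^2 + \l{X_u(\cdot,t)}^2\,.
\]
By Proposition~\ref{PNmiracle3}, the left-hand side equals $c_0$ for every $t\in[0,T)$.

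Next I would invoke the Wirtinger inequality on $\S=\R/2\pi\Z$: any $Y\in H^1(du)$ with $\int Y\,du = 0$ satisfies
\[
	\l{Y}^2 \le \l{Y_u}^2\,,
\]
applied component-wise to $X(\cdot,t)-\overline X(t)$. Substituting into the decomposition above yields
\[
	c_0 \le 2\l{X_u(\cdot,t)}^2 = 4\,Q[X(\cdot,t)]\,,
\]
which is precisely the desired bound $Q[X(\cdot,t)]\ge c_0/4$.

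Finally, to justify $c_0>0$ it suffices to note that $X_0\in H^1_+(du)$ implies $A[X_0]>0$; in particular $X_0$ is not a constant map, so $\|(X_0)_u\|_{L^2(du)}>0$, hence $c_0 \ge \l{(X_0)_u}^2>0$. I do not anticipate any real obstacle here: the main observation is simply that the two conservation laws (Propositions~\ref{PNmiracle} and~\ref{PNmiracle2}) together pin down a conserved quantity which, via Wirtinger, controls the Dirichlet energy from below.
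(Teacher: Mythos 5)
Your argument is correct and is essentially identical to the paper's proof: both use the conservation law of Proposition~\ref{PNmiracle3} together with the Poincar\'e--Wirtinger inequality on $\S$ applied to the zero-mean map $X(\cdot,t)-\overline X(t)$ to get $c_0 \le 4Q[X(\cdot,t)]$, and both justify $c_0>0$ by noting that $c_0=0$ would force $X_0$ to be constant, contradicting $A[X_0]>0$. No further comment is needed.
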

\begin{proof}
First we note that $||X(\cdot,0)-\overline{X}(0)||_{H^1(du)} = 0$ implies that
$X(\cdot,0)$ is a constant, which would mean that $A[X(\cdot,0)] = 0$.
This is impossible since $X(\cdot,0)\in H^1_+(du)$.

Now, using the Poincar\'e inequality with Proposition \ref{PNmiracle3} gives
\begin{align*}
c_0 &= ||X(\cdot,t)-\overline{X}(t)||_{H^1(du)}^2
\\
	&= 
		||X(\cdot,t)-\overline{X}(t)||_{L^2(du)}^2
		+ 2Q[X(\cdot,t)]
\\
	&\le 
		4Q[X(\cdot,t)]
\end{align*}
as required.
\end{proof}

Proposition \ref{PNmiracle4} allows us to conclude uniform estimates for area.
\begin{corollary}
\label{CYmiracle}
Let $X:\S\times[0,T)\rightarrow\R^2$ be a $H^1(du)$-gradient flow for $E$ on
$H^1_+(du)$.
Then 
\[
	\frac{c_0}{4} E^{-1}[X(\cdot,0)]
	\le A[X(\cdot,t)]
	\le \frac12\h{X(\cdot,0)}^2
\]
where $c_0 = ||X(\cdot,0)-\overline{X}(0)||_{H^1(du)} > 0$.
\end{corollary}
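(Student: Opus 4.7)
The plan is to combine the two conservation laws established in Propositions \ref{PNmiracle} and \ref{PNmiracle3} with the fact that $E$ is monotonically non-increasing along its own $H^1(du)$-gradient flow; the statement of the corollary then falls out directly.

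For the upper bound I would simply apply the estimate $A[X] \le \frac12 \h{X}^2$ proved as \eqref{EQareawelldef}, evaluated at $X(\cdot,t)$, and invoke Proposition \ref{PNmiracle} which says $\h{X(\cdot,t)} = \h{X(\cdot,0)}$. This is immediate.

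For the lower bound the key observation is the following energy identity. Using the defining property of the $H^1(du)$-gradient, together with the evolution equation $X_t = -DE[X]$, we have
\[
\frac{d}{dt} E[X(\cdot,t)] = dE[X](X_t) = \IPh{DE[X]}{X_t} = -\h{DE[X]}^2 \le 0,
\]
so $E[X(\cdot,t)] \le E[X(\cdot,0)]$. Since $X(\cdot,t) \in H_+^1(du)$ we have $A[X(\cdot,t)] > 0$ and therefore $E[X(\cdot,t)]>0$, which lets us invert the inequality to obtain $E^{-1}[X(\cdot,t)] \ge E^{-1}[X(\cdot,0)]$. Writing $A[X(\cdot,t)] = Q[X(\cdot,t)]/E[X(\cdot,t)]$ and applying the lower bound $Q[X(\cdot,t)] \ge c_0/4$ from Proposition \ref{PNmiracle4} then yields
\[
A[X(\cdot,t)] \ge \frac{Q[X(\cdot,t)]}{E[X(\cdot,0)]} \ge \frac{c_0}{4} E^{-1}[X(\cdot,0)],
\]
which is the claimed lower bound.

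The only substantive step is the monotonicity of $E$, and this is a one-line consequence of the gradient flow structure. There is no real obstacle here; rather, the significance of the corollary lies in its consequence, namely that the flow stays a definite distance away from the set $\SA$ on which $E$ and $DE$ are singular, which is exactly what is needed to bootstrap the local existence of Theorem \ref{TMSTE} into eternal existence.
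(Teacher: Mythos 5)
Your proof is correct and follows essentially the same route as the paper: the upper bound is \eqref{EQareawelldef} combined with norm preservation (Proposition \ref{PNmiracle}), and the lower bound combines monotonicity of $E$ along its gradient flow with the bound $Q[X(\cdot,t)]\ge c_0/4$ from Proposition \ref{PNmiracle4}; the paper merely writes the chain of inequalities starting from $E[X(\cdot,0)]\ge E[X(\cdot,t)]=Q/A\ge \frac{c_0}{4}A^{-1}$ and rearranges, which is algebraically identical to your version.
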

\begin{proof}
The monotonicity of the energy and Proposition \ref{PNmiracle4} implies
\[
	E[X(\cdot,0)] \ge E[X(\cdot,t)] = \frac{Q[X(\cdot,t)]}{A[X(\cdot,t)]}
	\ge \frac{c_0}{4} A^{-1}[X(\cdot,t)]
\]
or
\[
	A[X(\cdot,t)] \ge \frac{c_0}{4} E^{-1}[X(\cdot,0)]
	\,.
\]
This is the claimed lower bound.
For the upper bound, use \eqref{EQareawelldef} and Proposition \ref{PNmiracle}
to find
\[
	A[X(\cdot,t)] \le \frac12\h{X(\cdot,t)}^2 = \frac12\h{X(\cdot,0)}^2
	\,.
\]
This finishes the proof.
\end{proof}

A consequence of the lower bound for area (Corollary~\ref{CYmiracle}) and
preservation of norm (Proposition~\ref{PNmiracle}) is global existence:

\begin{corollary}
\label{CYeternal}
For each $X_0\in H^1_+(du)$ there exists a unique map $X\in
C^1(\R; H^1_+(du))$ such that $X(\cdot,0) = X_0$ and
$X:\S\times\R\rightarrow\R^2$ is a $H^1(du)$-gradient flow for $E$ on
$H^1_+(du)$.
\end{corollary}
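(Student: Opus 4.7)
The plan is a continuation argument built on the conservation laws and the area bound established in this section. Theorem \ref{TMSTE} yields local-in-time existence and uniqueness starting from $X_0\in H^1_+(du)$, so I would define $(T_-, T_+)$ as the maximal interval of existence in $C^1$ with values in $H^1_+(du)$. The goal is then to show $T_+ = \infty$; the argument for $T_- = -\infty$ is identical.

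Arguing by contradiction, suppose $T_+ < \infty$. For each $t_* \in [0, T_+)$ I would apply Theorem \ref{TMSTE} with initial datum $X(\cdot, t_*)$, producing a $C^1$ solution on an interval of length $2T_0(t_*)$ centred at $t_*$. If one can bound $T_0(t_*)$ below uniformly in $t_*$, then picking $t_*$ sufficiently close to $T_+$ yields an extension past $T_+$, contradicting maximality. So the core task is to prove such a uniform lower bound.

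The key observation is that the admissible time step $T_0(t_*)$ in Theorem \ref{TMSTE} depends only on the parameters $b$ and $K$ appearing in Lemma \ref{blip}, and inspection of that lemma shows these depend only on the quantities $\h{X(\cdot, t_*)}$ and $A[X(\cdot, t_*)]$. By Proposition \ref{PNmiracle} the $H^1(du)$-norm is preserved: $\h{X(\cdot, t_*)} = \h{X_0}$. By Corollary \ref{CYmiracle} the area stays bounded below by the strictly positive constant $\frac{c_0}{4} E^{-1}[X_0]$, where $c_0 = \h{X_0 - \overline{X}(0)}^2 > 0$; positivity holds because $X_0 \in H^1_+(du)$ forces $A[X_0] > 0$, so $X_0$ cannot be a constant map. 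These two uniform controls feed into Lemma \ref{blip} to yield a uniform lower bound on $T_0(t_*)$, as required.

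The main obstacle, if any, is the bookkeeping of concatenating the locally-produced solutions into a single $C^1$ curve on $\R$ valued in $H^1_+(du)$. Here the uniqueness clause of Theorem \ref{TMSTE} is essential: on the overlap of two local solutions starting from data at different times they must coincide, so the extension is unambiguous. The area lower bound is also needed to ensure that the extended trajectory remains inside $H^1_+(du)$, not merely in $\HoneC$, so that Theorem \ref{TMSTE} can be reapplied at the next step. Once $[0, \infty)$ is covered in this way, the symmetric argument on the backward interval produces the required map $X \in C^1(\R; H^1_+(du))$.
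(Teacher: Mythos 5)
Your proposal is correct and follows essentially the same route as the paper: local existence from Theorem \ref{TMSTE} is iterated, with the uniform time step coming from the conserved $H^1(du)$-norm (Proposition \ref{PNmiracle}) and the uniform area lower bound (Corollary \ref{CYmiracle}) feeding into the constants $b$ and $K$ of Lemma \ref{blip}. The only cosmetic difference is that you phrase the continuation as a contradiction on a maximal interval while the paper adds a uniform increment at each iteration; the substance is identical.
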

\begin{proof}
According to Theorem \ref{TMSTE}, given $X_0\in H^1_+(du)$, there is a unique
solution $X(t,u)$ for $t\in [-T_0,T_0]$ provided $KT_0 < b$, where $b$ is the
radius of the cylinder in Lemma \ref{blip}.

Following the standard continuation procedure, one takes
$X_{\pm}:=X(\cdot,\pm T_0)$ as the initial data for a new application of
Theorem \ref{TMSTE}.
The existence time will be extended to $[-T_0-T_1, T_0+T_1]$ provided $KT_1 <
b$, where again $b$ and $K$ are as in Lemma \ref{blip}, but now with $X_0 =
X_{\pm}$.

In the forward and backward time direction, the $H^1(du)$ norm of $X$ is constant.
This means that in order to conclude a uniform amount of time will be added
with each iteration and we will eventually obtain a global solution, an
estimate for the area $A[X(\cdot,t)]$ is required.
This is guaranteed by Corollary \ref{CYmiracle}, and so we are able to add a
uniform amount of time with each iteration of the Picard theorem, and thus
obtain as claimed a global solution.
\end{proof}

\begin{proposition}
\label{PNconv1}
Let $X:\S\times\R\rightarrow\R^2$ be a $H^1(du)$-gradient flow for $E$ on
$H^1_+(du)$.
Suppose that $||X(\cdot,0)-\overline{X}(0)||_{H^1(du)} = c_0 > 0$.
Then there exists a smooth non-constant limit $Y\in C^\infty_+$ (here
$C^\infty_+$ is the subspace of $H^1_+(du)$ consisting of smooth maps) and
sequence $t_n\rightarrow\infty$ such that $X(\cdot,t_n)$ converges (strongly) to $Y$ in
the $C^{0,\alpha}$ topology for $\alpha\in(0,\frac12)$.
Furthermore $Y\in\sS$; in particular, it is a multiply-covered circle.
\end{proposition}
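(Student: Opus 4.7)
The plan is to combine the dissipation inherent to the gradient flow with the compactness supplied by conservation of the $H^1(du)$-norm.

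\emph{Extraction of a gradient-vanishing sequence.} From $X_t = -DE[X]$ one obtains $\tfrac{d}{dt}E[X(\cdot,t)] = -\h{DE[X(\cdot,t)]}^2$. Since $E \ge 0$ on $H^1_+(du)$, integrating in time yields $\int_0^\infty \h{DE[X(\cdot,t)]}^2\,dt \le E[X(\cdot,0)] < \infty$, so there is a sequence $t_n \to \infty$ with $\h{DE[X(\cdot,t_n)]} \to 0$. Proposition~\ref{PNmiracle} bounds $\{X(\cdot,t_n)\}$ in $H^1(du)$, and since the embedding $H^1(\S) \hookrightarrow C^{0,\alpha}(\S)$ is compact for $\alpha \in (0,\tfrac12)$, I can pass to a subsequence (not relabeled) converging strongly in $C^{0,\alpha}$ and in $L^2(du)$, and weakly in $H^1(du)$, to some limit $Y$.

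\emph{Nondegeneracy of $Y$.} The bilinear form $A[X] = -\tfrac12\int X \cdot \Ro X_u\,du$ pairs an $L^2$-strongly convergent factor with an $L^2$-weakly convergent factor, and is therefore weakly continuous on $H^1(du)$. The lower bound from Corollary~\ref{CYmiracle} thus passes to the limit, giving $A[Y] \ge \tfrac{c_0}{4} E^{-1}[X(\cdot,0)] > 0$; in particular $Y \in H^1_+(du)$ is non-constant.

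\emph{Identification with $\sS$.} Using the identity behind Lemma~\ref{LMevar}, for every $V \in H^1(du)$ one has $\IPh{DE[X(\cdot,t_n)]}{V} = A^{-1}[X(\cdot,t_n)]\IPl{(X(\cdot,t_n))_u}{V_u} + A^{-1}[X(\cdot,t_n)]E[X(\cdot,t_n)]\IPl{\Ro (X(\cdot,t_n))_u}{V}$. The left-hand side tends to $0$ by the first step. On the right, $A[X(\cdot,t_n)] \to A[Y] > 0$; the monotone decreasing, bounded-below sequence $E[X(\cdot,t_n)]$ converges to some $E_\infty \ge 0$; and the weak $L^2$-convergence of $(X(\cdot,t_n))_u$ to $Y_u$ passes both inner products to the limit. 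Integrating by parts, the resulting identity reads $Y_{uu} = E_\infty\,\Ro Y_u$ in the distributional sense. This ODE has only smooth periodic solutions, so $Y \in C^\infty$, and the derivation in the proof of Proposition~\ref{PNequil} (starting precisely from this ODE) identifies $Y$ as an element of $\sS$, i.e.\ a multiply-covered circle.

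The main obstacle is the absence of strong $H^1$-convergence; the argument must rely on the weak continuity of $A$ and on fixing a test vector $V$ so that each factor in the formula for $dE[X](V)$ passes to the limit under only weak $H^1$-convergence of $X(\cdot,t_n)$. A minor consistency point is whether $E_\infty$ coincides with $E[Y]$: this holds automatically because any element of $\sS$ has energy equal to its (positive integer) winding number.
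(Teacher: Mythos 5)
Your proof is correct and follows essentially the same route as the paper: the dissipation identity yields a sequence along which $\h{DE[X(\cdot,t_n)]}\to 0$, conservation of the $H^1(du)$-norm gives weak $H^1$ and strong $C^{0,\alpha}$ compactness, and the limit is identified with the equilibrium set classified in Proposition~\ref{PNequil}. The only cosmetic differences are that the paper extracts the sequence via Lesigne's theorem rather than the elementary integrability argument, and that your weak-formulation passage to the limit (together with the explicit verification that $A[Y]>0$ via the lower bound of Corollary~\ref{CYmiracle}) spells out details the paper leaves implicit.
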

\begin{proof}
Since $\int_0^\infty ||DE[X(\cdot,t)||_{H^1(du)}^2\,dt \le E[X_0] < \infty$,
we have that $t\mapsto ||DE[X(\cdot,t)||_{H^1(du)}^2$ is an $L^1(0,\infty)$ function, and so by Lesigne's theorem (see \cite{L}) for almost every $t>0$ we have
\begin{equation}
\label{EQlesigne}
	\lim_{n\rightarrow\infty} ||DE[X(\cdot,nt)||_{H^1(du)}^2 = 0\,.
\end{equation}
In particular, there exists a sequence $\{t_n\}$ such that $t_n\nearrow\infty$
and 
\[
\lim_{n\rightarrow\infty} ||DE[X(\cdot,t_n)||_{H^1(du)}^2 = 0.
\]

The constancy of the $H^1(du)$ norm implies uniform boundedness of the family
$\{X(\cdot,t_n)\}$ in $H^1(du)$, hence by weak compactness existence of a map
$Y\in H^1(du)$ and subsequence (that we still denote by $t_n$)
$\{t_n\}\subset(0,\infty)$ such that $X(\cdot,t_n)$ converges weakly to $Y$ in
$H^1(du)$.
Rellich--Kondrachov implies that the convergence is strong in $C^{0,\alpha}$ for
any $\alpha\in(0,\frac12)$.


Due to \eqref{EQlesigne}, the limit $Y$ satisfies the equilibrium equation
$DE[Y] = 0$.
Thus we obtain that $Y$ is smooth, and furthermore that $Y$ is not a constant
map.
Proposition \ref{PNequil} applies to give that $Y\in\sS$, in particular, $Y$ is
an $\ell$-times covered circle.
This finishes the proof.
\end{proof}

\begin{remark}
Proposition \ref{PNequil} applies to give not only that $Y\in\sS$ but that $Y$
has parametrisation
\[
		Y(u) = \frac{a}{\ell}\begin{pmatrix} \sin(\ell u) \\ -\cos(\ell u) \end{pmatrix}
 + \frac{b}{\ell}\begin{pmatrix} \cos(\ell u) \\  \sin(\ell u) \end{pmatrix}
	+ \begin{pmatrix} c \\ d \end{pmatrix}, 
\]
for some $a,b,c,d\in\R$ and $\ell\in\N$.
Using the initial data, we may eliminate three of these degrees of freedom and
constrain a fourth.

First, as the centre of the flow is invariant, we have
\[
	\begin{pmatrix} c \\ d \end{pmatrix}
	= \overline{X}(0)
	\,.
\]
This removes two of the constants.
Then, since
\[
	\h{Y - \overline{Y}}^2
	= 2\pi\frac{a^2 + b^2}{\ell} + 2\pi(a^2+b^2)
	= 2\pi\frac{1+\ell}{\ell}(a^2 + b^2)
\]
we find
\[
	a^2+b^2 = \frac{\ell}{2\pi(1+\ell)}\h{X(\cdot,0)-\overline{X}(0)}^2
	\,.
\]
This means that $b$ can be expressed in terms of $a$, $\ell$ and $X(\cdot,0)$.
Finally, we also know that $1 \le \ell \le E[X_0]$, removing one more constant
and constraining $\ell$.
\end{remark}

Next we upgrade the convergence topology to the best possible using a direct argument.
The below is the same as Proposition \ref{PNconv1}, except the convergence
sense is improved to $H^1(du)$.

\begin{proposition}
\label{PNconv2}
Let $X:\S\times\R\rightarrow\R^2$ be a $H^1(du)$-gradient flow for $E$ on
$H^1_+(du)$.
Suppose that $||X(\cdot,0)-\overline{X}(0)||_{H^1(du)} = c_0 > 0$.
Then there exists a smooth non-constant limit $Y\in C^\infty_+$ 
and sequence $t_n\rightarrow\infty$ such that $X(\cdot,t_n)$ converges to $Y$
in the $H^1(du)$ topology; that is,
\begin{equation}
\label{EQbettertop3}
	\lim_{n\rightarrow\infty} \h{X(\cdot,t_n) - Y} = 0\,.
\end{equation}
The limit $Y$ is a member of $\sS$, the stationary set; in
particular, it is a multiply-covered circle.
\end{proposition}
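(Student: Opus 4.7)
The aim is to upgrade the $C^{0,\alpha}$ convergence of Proposition \ref{PNconv1} to strong $H^1(du)$ convergence, passing to a further subsequence if needed. The strategy is to exploit the explicit representation $DE[X] = A^{-1}[X](X'' - E[X]\Ro X')*\G$ together with the Green's function functional equation to show that $X_n$ can be written, up to an error going to $0$ in $H^1(du)$, as a sum of convolutions to which a smoothing argument applies.

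I inherit from Proposition \ref{PNconv1} the sequence $X_n := X(\cdot, t_n)$, which converges weakly in $H^1(du)$ and strongly in $C^{0,\alpha}$ to $Y \in \sS$, with $\norm{DE[X_n]}_\hh1 \to 0$. Monotonicity of $E$ along the flow yields $E[X_n] \to E_\infty$ for some $E_\infty$. Using the identity $X''*\G = X + X*\G$ (a consequence of $\G_{uu} = \G + \delta$) together with the uniform positive lower bound on $A[X_n]$ from Corollary \ref{CYmiracle}, the condition $DE[X_n] \to 0$ in $H^1(du)$ rewrites as
\[
	X_n + X_n*\G - E[X_n](\Ro X_n)'*\G \to 0 \quad\text{in}\ H^1(du).
\]

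The key step, and the main technical obstacle, is a smoothing estimate: uniform convergence $X_n \to Y$ implies strong $H^1(du)$ convergence $X_n * \G \to Y * \G$ and $(\Ro X_n)'*\G \to (\Ro Y)'*\G$. Convergence at the $L^2(du)$ level is immediate from the bounds $|\G|, |\G_u| \le C$, but controlling the $L^2$ norm of the $u$-derivative of $(\Ro X_n)'*\G = \Ro X_n * \G_u$ requires the further identity $(\Ro X_n * \G_u)_u = \Ro X_n * \G + \Ro X_n$, which reintroduces $\Ro X_n$ itself -- both terms again controlled by the uniform convergence and boundedness of $\G$. Once this smoothing step is in place, combining with $E[X_n] \to E_\infty$ shows that $X_n$ converges strongly in $H^1(du)$ to $-Y*\G + E_\infty(\Ro Y)'*\G$; uniqueness of the weak limit then identifies this strong limit with $Y$, giving \eqref{EQbettertop3}.
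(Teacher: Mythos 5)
Your argument is correct, and it reaches the conclusion by a genuinely different organisation of the same ingredients. The paper expands $\h{X(\cdot,t_n)-Y}^2$ directly, converts the derivative term into a pairing with $(X''(\cdot,t_n)-Y'')*\G$ via Lemma~\ref{LMfundiden}, substitutes $X''*\G=A[X]DE[X]+E[X]\Ro X'*\G$, and then absorbs to obtain the quantitative bound \eqref{EQbettertop2} in terms of $\norm{X(\cdot,t_n)-Y}_{L^2(du)}^2$, $\h{DE[X(\cdot,t_n)]}^2$ and the difference of the convolution terms. You instead invert the relation: from $DE[X_n]\to 0$ and the functional equation $\G_{uu}=\G+\delta$ you write $X_n=-X_n*\G+E[X_n]\Ro X_n'*\G+o_{H^1(du)}(1)$, observe that $W\mapsto W*\G$ and $W\mapsto W'*\G=W*\G_u$ send $L^2(du)$-convergent sequences to $H^1(du)$-convergent ones (again by $\G_{uu}=\G+\delta$ and the pointwise bounds on $\G$ and $\G_u$), pass to the limit term by term, and identify the strong limit with $Y$ by uniqueness of weak limits. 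Your route avoids the absorption step and makes the compactness mechanism transparent; the paper's route produces an explicit inequality for $\h{X(\cdot,t_n)-Y}^2$ that can be reused quantitatively. Two small points to tidy: (i) to pass from $DE[X_n]\to 0$ to $(X_n''-E[X_n]\Ro X_n')*\G\to 0$ you multiply by $A[X_n]$, so what you actually need is the \emph{upper} bound on the area (equivalently $A^{-1}[X_n]$ bounded below), not the lower bound you cite --- both are available from Corollary~\ref{CYmiracle} together with Proposition~\ref{PNmiracle}, so nothing breaks; (ii) the convergence of the product $E[X_n]\Ro X_n'*\G$ uses that the scalar factors $E[X_n]\le E[X_0]$ and the norms $\h{\Ro X_n'*\G}$ are uniformly bounded, which is worth saying explicitly.
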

\begin{proof}
We calculate using Lemma \ref{LMfundiden}: 
\begin{equation}
\label{EQbettertop}
\begin{aligned}
	||&X(\cdot,t_n) - Y||_{H^1(du)}^2 \\
	&= 
		||X(\cdot,t_n) - Y||_{L^2(du)}^2
		+ ||X_u(\cdot,t_n) - Y_u||_{L^2(du)}^2 \\
	&= 
		||X(\cdot,t_n) - Y||_{L^2(du)}^2
		- \IPh{(X'(\cdot,t_n)-Y')*\G}{X_u(\cdot,t_n)-Y_u} \\
	&= 
		||X(\cdot,t_n) - Y||_{L^2(du)}^2
		+ \IPh{(X''(\cdot,t_n)-Y'')*\G}{X(\cdot,t_n)-Y}\,. 
\end{aligned}
\end{equation}


Our goal is to take a limit as $n\rightarrow\infty$ in \eqref{EQbettertop}.
We work with the second term on the RHS.
Substituting the highest order terms for the gradient with
\[
	X''*\G = A[X]DE[X] + E[X]\Ro X'*\G
\]
and estimating yields 
\begin{align*}
	&\IPh{(X''(\cdot,t_n)-Y'')*\G}{X(\cdot,t_n)-Y}
\\
	&\qquad = 
		\IPh{A[X(\cdot,t_n)]DE[X(\cdot,t_n)] + E[X(\cdot,t_n)] \Ro X'(\cdot,t_n)*\G - Y''*\G}{X(\cdot,t_n)-Y}
\\
	&\qquad= 
		A[X(\cdot,t_n)]\IPh{DE[X(\cdot,t_n)]}{X(\cdot,t_n)-Y}
	\\&\qquad\qquad
		+ \IPh{E[X(\cdot,t_n)]\Ro X'(\cdot,t_n)*\G- E[Y]{\Ro Y'*\G}}{X(\cdot,t_n)-Y}
\\
	&\qquad\le 
		\frac14\h{X(\cdot,t_n) - Y}^2
		 + A^2[X(\cdot,t_n)]\h{DE[X(\cdot,t_n)]}^2
	\\&\qquad\qquad
		+ \frac14 ||X(\cdot,t_n)-Y||_{H^1(du)}^2
		+ \h{E[X(\cdot,t_n)]\Ro X'(\cdot,t_n)*\G - E[Y]\Ro Y'*\G}^2
\,.
\end{align*}
Combining the above estimate with \eqref{EQbettertop} and absorbing, we find
\begin{align}
	||X(\cdot,t_n) - Y||_{H^1(du)}^2
	&\le 
		2||X(\cdot,t_n) - Y||_{L^2(du)}^2
		 + 2A^2[X(\cdot,t_n)]\h{DE[X(\cdot,t_n)]}^2
\notag	\\&\qquad
		 2\h{E[X(\cdot,t_n)]\Ro X'(\cdot,t_n)*\G - E[Y]\Ro Y'*\G}^2
\label{EQbettertop2}
\,.
\end{align}
The $C^{0,\alpha}$ subconvergence implies the first term on the RHS above
tends to zero as $n\rightarrow\infty$.
For the second, we use the gradient structure and take a further subsequence
(also denoted $t_n$) to ensure that it also vanishes as $n\rightarrow\infty$.
For the third, we calculate further
\begin{align*}
2&\h{E[X(\cdot,t_n)]\Ro X'(\cdot,t_n)*\G - E[Y]\Ro Y'*\G}^2
\\
&= 2\l{E[X(\cdot,t_n)]\Ro X'(\cdot,t_n)*\G - E[Y]\Ro Y'*\G}^2
\\&\quad
+ 2\l{E[X(\cdot,t_n)]\Ro X'(\cdot,t_n)*\G_u - E[Y]\Ro Y'*\G_u}^2
\\
&= 2\l{E[X(\cdot,t_n)]\Ro X'(\cdot,t_n)*\G - E[Y]\Ro Y'*\G}^2
\\&\quad
+ 2||E[X(\cdot,t_n)]\Ro X(\cdot,t_n) + E[X(\cdot,t_n)]\Ro X(\cdot,t_n)*\G
\\&\qquad\qquad\qquad\qquad\qquad\qquad - E[Y]\Ro Y - E[Y]\Ro Y*\G||^2_{L^2(du)}
\\
&\le 2\l{E[X(\cdot,t_n)]\Ro X(\cdot,t_n)*\G_u - E[Y]\Ro Y*\G_u}^2
\\&\quad
+ 4\l{E[X(\cdot,t_n)]\Ro X(\cdot,t_n) - E[Y]\Ro Y}^2
\\&\quad
+ 4\l{E[X(\cdot,t_n)]\Ro X(\cdot,t_n)*\G - E[Y]\Ro Y*\G}^2
\\
&\le \pi^2\l{E[X(\cdot,t_n)]X(\cdot,t_n) - E[Y]Y}^2
\\&\quad
+ 4\l{E[X(\cdot,t_n)] X(\cdot,t_n) - E[Y] Y}^2
\\&\quad
+ 4\pi\coth\pi\l{E[X(\cdot,t_n)] X(\cdot,t_n) - E[Y] Y}^2
\,.
\end{align*}
In the last step above we used the estimates we have on $\G$, namely $|\G_u|
\le \frac12$ and $|\G| \le \frac12\coth\pi$.

Since $E[X(\cdot,t_n)]\rightarrow E[Y]$, convergence in $L^2(du)$ (already
implied by the $C^{0,\frac12}$ convergence) gives that all terms on the RHS
above tend to zero.
This in turn implies all terms on the RHS of \eqref{EQbettertop2} converge to
zero, implying \eqref{EQbettertop3} and finishing the proof.
\end{proof}


\section{Full convergence}

In this section we show how Feehan's new Morse--Bott condition
\cite{feehan2020morse} can be used for our flow to obtain the
 \L ojasiewicz--Simon gradient inequality with \emph{optimal} exponent.

The following definitions are from \cite{feehan2020morse} (1.8 and 1.9) but we
make some small changes to the notation. 

\begin{definition}
Let $\mathcal X$ and $\tilde{\mathcal X}$ be Banach spaces with
$\tilde{\mathcal X}$ continuously embedded $\tilde{\mathcal {X}}\subset
\mathcal X^*$, and $\mathcal U\subset \mathcal X$ an open subset. 
Let $f:\mathcal U\to \R$ be a $C^1$ function.
A continuous map $\mathcal M :\mathcal U\to
\tilde{\mathcal X}$ satisying
\[ df(x)v=(\mathcal M (x),v)_{\mathcal X^*\times \mathcal X}\]
 is called a \emph{gradient map} for $f$. 
\end{definition}

\begin{definition}\label{def:gmb}
Let $\mathcal X, \tilde{\mathcal X}, \mathcal U$ be as above and furthermore
suppose $f:\mathcal U\to \R$ is $C^2$ and $\mathcal M :\mathcal U\to
\tilde{\mathcal X}$ is a gradient map for $f$. If $\crit f$ is a $C^2$
connected submanifold of $\mathcal U$  we say $(f,\mathcal M) $ is Morse--Bott
at $Y\in \crit f$ if 
$T_{Y} \crit f=\ker d\mathcal{M}(Y)$ and  $\ran d\mathcal{M}(Y)=\tilde{\mathcal X}$. 
\end{definition}

\begin{theorem}\label{feehanthm}(\cite{feehan2020morse} Theorem 7)
 Let $\mathcal X, \tilde{\mathcal X}, \mathcal U$ be as above and $f:\mathcal
U\to \R$ a $C^{3}$ function 
 such that $f(Y)=0$ and $f'(Y)=0$, and
let $\mathcal M:\mathcal X\to \tilde{\mathcal X}$ be a gradient map for
$f$. If $(f,\mathcal M)$ is Morse--Bott at $Y$ in the sense of
Definition \ref{def:gmb} then, after possibly shrinking $\mathcal U$, there is
a constant $C\in (0,\infty)$ such that
\begin{equation}
\norm{\mathcal M(x)}_{\tilde {\mathcal X}}\geq C\abs{f(x)}^{1/2}, \quad \text{ for all } x\in \mathcal U
\,.
\end{equation}
\end{theorem}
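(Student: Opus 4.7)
The statement is the Morse--Bott analogue of the \L ojasiewicz--Simon gradient inequality with optimal exponent $\frac12$, so my plan is to reduce it to a quantitative implicit function theorem in a neighbourhood of $Y$. The idea is to use the two Morse--Bott hypotheses to produce a smooth local splitting transverse to $\crit f$, then Taylor-expand $\mathcal{M}$ and $f$ in the transverse direction.

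First I would exploit the two conditions $T_{Y}\crit f=\ker d\mathcal{M}(Y)$ and $\ran d\mathcal{M}(Y)=\tilde{\mathcal X}$. Together they imply that $d\mathcal{M}(Y)$ descends to a topological isomorphism from any closed complement $\mathcal N$ of $T_Y\crit f$ in $\mathcal X$ onto $\tilde{\mathcal X}$. Because $\crit f$ is a $C^2$ submanifold, I can choose such a complement compatible with local charts, and then apply the implicit function theorem to the restriction of $\mathcal{M}$ to an affine slice modelled on $\mathcal N$ to obtain a $C^1$ nearest-point-style projection $\pi:\mathcal U\to\crit f$ with $x-\pi(x)\in\mathcal N$ after shrinking $\mathcal U$.

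With $y=\pi(x)$ and $v=x-\pi(x)\in\mathcal N$, Taylor's theorem and $\mathcal{M}\equiv 0$ on $\crit f$ give
\begin{equation*}
\mathcal{M}(x) = d\mathcal{M}(y)\,v + O(\norm{v}_{\mathcal X}^2),
\end{equation*}
and the continuity of $y\mapsto d\mathcal{M}(y)|_{\mathcal N}$ together with its invertibility at $Y$ produces a lower bound $\norm{\mathcal{M}(x)}_{\tilde{\mathcal X}}\ge c\norm{v}_{\mathcal X}$ after a further shrinking of $\mathcal U$. Similarly, since $f$ is constant along the connected critical manifold through $Y$ (and equal to $0$ there) and since $df(y)=0$ on $\crit f$, a second-order Taylor expansion yields
\begin{equation*}
|f(x)| \le C\,\norm{v}_{\mathcal X}^2,
\end{equation*}
where the constant only involves $\sup_{\mathcal U}\norm{d\mathcal{M}}$ and the $C^3$ norm of $f$. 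Combining the two bounds gives $\norm{\mathcal{M}(x)}_{\tilde{\mathcal X}}^2\ge (c^2/C)\,|f(x)|$, which is the claimed inequality.

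The main obstacle I anticipate is justifying the splitting and the projection $\pi$ in the asymmetric Banach setting $\tilde{\mathcal X}\hookrightarrow\mathcal X^*$: in particular, one must verify that $d\mathcal{M}(Y):\mathcal N\to\tilde{\mathcal X}$ really is an isomorphism (not merely a bijection), that a closed complement of $T_Y\crit f$ exists inside $\mathcal X$, and that the resulting projection is smooth enough to support the Taylor expansions above uniformly on a neighbourhood rather than only pointwise at $Y$. Since the statement is attributed to \cite{feehan2020morse}, in the paper itself I would simply invoke that reference; the sketch above is how I would carry out the argument if pressed.
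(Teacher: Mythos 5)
The paper offers no proof of this statement: it is quoted verbatim as Theorem 7 of \cite{feehan2020morse}, so your closing remark --- that in the paper one would simply invoke that reference --- matches exactly what the authors do. Your sketch of the underlying argument (a slice transverse to the critical manifold via the implicit function theorem, a first-order expansion of $\mathcal M$ giving $\norm{\mathcal M(x)}_{\tilde{\mathcal X}}\ge c\norm{v}_{\mathcal X}$, and a second-order expansion of $f$ giving $|f(x)|\le C\norm{v}_{\mathcal X}^2$) is a faithful outline of the standard Morse--Bott route to the optimal exponent $1/2$, but since the paper supplies no proof there is nothing to compare it against in detail.
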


In our case we have $f = E - E[Y]:\SU\rightarrow\R$, $f\in C^3$, $\SU$ an open
neighbourhood of $Y$, and $\mathcal M:= DE:\SU\to H^1(du)$ given by
\[
DE[X]=A^{-1}[X](X''-E[X] \rot X') * \G
\]
and then 
\begin{align*}
d\M[X]W
&=
	- A^{-2}[X] dA[X]W\, DE[X]
\\&\qquad
	+ A^{-1}[X](W''-dE[X]W \rot X'-E[X] \rot W') * \G
\,.
\end{align*}
If $Y$ is a critical point then $DE[Y]=0=dE[Y]$ so 
\[ 
d\M[Y]W=A^{-1}[Y](W''-E[Y]\rot W')*\G\,.
\]

\begin{theorem}
\label{TMmb}
With notation as above, the pair $(E,DE)$ is Morse--Bott at $Y$.
\end{theorem}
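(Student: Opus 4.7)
The plan is to verify both conditions in Definition~\ref{def:gmb} by explicit computation, leveraging the full description of the stationary set from Proposition~\ref{PNequil}. Since $\crit E = \bigsqcup_{\ell\in\N}\sS_\ell$ decomposes by energy level into $4$-dimensional affine subspaces of $H^1(du)$, and since $E$ is locally constant on each piece, the connected component through $Y$ is $\sS_\ell$ with $\ell = E[Y]\in\N$. This is a $C^\infty$ submanifold, and differentiating the parametrisation in Proposition~\ref{PNequil} with respect to $(a,b,c,d)$ gives an explicit basis
\[
V_1=\tfrac{1}{\ell}\begin{pmatrix}\sin\ell u\\1-\cos\ell u\end{pmatrix},\quad V_2=\tfrac{1}{\ell}\begin{pmatrix}\cos\ell u-1\\\sin\ell u\end{pmatrix},\quad V_3=\begin{pmatrix}1\\0\end{pmatrix},\quad V_4=\begin{pmatrix}0\\1\end{pmatrix}
\]
for $T_Y\sS_\ell$.

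Next I would simplify the formula for $d\M(Y)$ displayed above the statement. Since $DE[Y]=0$ and $dE[Y]W = \IPh{DE[Y]}{W}=0$, only the last term of $d\M[X]W$ survives:
\[
d\M(Y)W = A^{-1}[Y]\bigl(W''-\ell\rot W'\bigr)*\G.
\]
Using $(W*\G)_{uu}=W+W*\G$ (from the proof of Lemma~\ref{LMfundiden}) to rewrite $W''*\G = W + W*\G$, this takes the form $d\M(Y) = A^{-1}[Y](I+K)$ with $KW := W*\G - \ell\rot W'*\G$. Since convolution with $\G$ maps $L^2(du)$ continuously into $H^2(du)$, the operator $K$ factors through $H^2(du)\hookrightarrow H^1(du)$ and is therefore compact on $H^1(du)$ by Rellich--Kondrachov. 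Consequently $d\M(Y)$ is a Fredholm operator of index zero on $H^1(du)$.

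To compute $\ker d\M(Y)$, observe that the distributional identity $(f*\G)_{uu}-f*\G=f$ implies that convolution with $\G$ is injective, so $d\M(Y)W=0$ reduces to the ODE $W''=\ell\rot W'$ on $\S$. This is precisely the equilibrium equation \eqref{EQequil} used in the proof of Proposition~\ref{PNequil}, and an identical bootstrap argument solves it component-wise and imposes $2\pi$-periodicity, yielding the $4$-parameter family spanned by $V_1,V_2,V_3,V_4$. Therefore $\ker d\M(Y) = T_Y\sS_\ell$, which is the first Morse--Bott condition. For the range condition, I would use symmetry of $d\M(Y)$ with respect to the $H^1(du)$ inner product: this holds because $d\M(Y)$ is the $H^1(du)$-Hessian of the $C^2$ functional $E$ at a critical point, and can also be verified directly from the explicit formula together with Lemma~\ref{LMfundiden}. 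Symmetry plus the Fredholm property yield $\ran d\M(Y) = (\ker d\M(Y))^\perp$, which is a closed subspace of $H^1(du)$ of codimension $4$, complementary to $T_Y\sS_\ell$. Identifying the target space $\tilde{\mathcal X}$ of Definition~\ref{def:gmb} with this orthogonal complement (consistent with the embedding $\tilde{\mathcal X}\subset\mathcal X^*$) then delivers $\ran d\M(Y) = \tilde{\mathcal X}$.

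The main obstacle I anticipate is the functional-analytic bookkeeping around Feehan's framework: the ODE computation of the kernel and the Fredholm/compactness argument are routine, but care is needed to pick $\tilde{\mathcal X}$ so that the surjectivity assertion in Definition~\ref{def:gmb} is the correct statement for our Hilbert setting and so that Theorem~\ref{feehanthm} applies cleanly. A secondary subtlety is verifying that the symmetry of $d\M(Y)$ really does hold in the $H^1(du)$ inner product rather than merely in a weaker pairing; this is where the precise form of Lemma~\ref{LMfundiden} is essential.
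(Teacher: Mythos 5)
Your route is genuinely different from the paper's: rather than solving $d\M[Y]W=Z$ directly, you use the decomposition $d\M(Y)=A^{-1}[Y](I+K)$ with $K$ compact, plus symmetry of the Hessian in the $H^1(du)$ inner product. Your kernel computation is correct and establishes the first Morse--Bott condition exactly as needed: injectivity of convolution with $\G$ reduces $d\M(Y)W=0$ to the periodic ODE $W''=\ell\rot W'$, whose solution space is the four-dimensional $T_Y\sS_\ell$. Your symmetry claim is also correct: by Lemma~\ref{LMfundiden}, $\IPh{d\M(Y)W}{V}=A^{-1}[Y]\big(\IPl{W'}{V'}-\ell\IPl{\rot W}{V'}\big)$, and one integration by parts (with the antisymmetry of $\rot$) shows this bilinear form is symmetric. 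But you should take the consequence seriously: a bounded symmetric operator on a Hilbert space with nontrivial kernel cannot be onto, so $\ran d\M(Y)=(\ker d\M(Y))^{\perp}$ has codimension four. This contradicts the paper's own proof, which asserts surjectivity onto all of $H^1(du)$. The discrepancy is not yours: the paper's identity $\IPh{Z}{\phi}=\IPl{\mathcal Z}{\phi_u}$ with $\mathcal Z=Z_u-\int_0^uZ(\tau)\,d\tau$ requires $\overline{Z}=0$ (otherwise $\int_0^u Z$ is not periodic and the integration by parts produces a boundary term), and since $e^{2\pi B}=I$ the first-order system $W_u=BW+A[Y]\mathcal Z$ is resonant, so a periodic solution exists only when $\int_0^{2\pi}e^{-Bu}\mathcal Z\,du=0$ --- two more conditions, giving total codimension four, in agreement with your Fredholm count.

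The genuine gap in your argument is the final step. In Definition~\ref{def:gmb} the gradient map is required to be a map $\M:\mathcal U\to\tilde{\mathcal X}$ on the whole neighbourhood, so you cannot retroactively set $\tilde{\mathcal X}:=(\ker d\M(Y))^{\perp}$: while $\IPh{DE[X]}{V_i}=dE[X](V_i)$ vanishes for the constant directions $V_3,V_4$ at every $X$ (this is Proposition~\ref{PNmiracle2}), for $V_1,V_2$ it vanishes at $Y$ and to first order in $X-Y$ (because $V_i\in\ker d\M(Y)$ and the Hessian is symmetric), but there is no identity forcing it to vanish for all $X$ near $Y$. So with your $\tilde{\mathcal X}$ the hypothesis that $\M$ maps $\mathcal U$ into $\tilde{\mathcal X}$ fails, while with the paper's choice $\tilde{\mathcal X}=H^1(du)$ the range condition fails. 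What your analysis does establish is the classical integrability condition --- $d\M(Y)$ is self-adjoint Fredholm of index zero with $\ker d\M(Y)=T_Y\crit E$ and closed range equal to $(\ker d\M(Y))^{\perp}$ --- and the correct repair is to invoke a version of the Morse--Bott/\L ojasiewicz criterion stated for exactly this situation (or to rework Feehan's hypotheses so that the mapping and range conditions can be met by a single choice of $\tilde{\mathcal X}$), rather than to claim surjectivity onto $H^1(du)$. As written, neither your proof nor the paper's closes this step.
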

\begin{proof}
We need to show that $0$ is a regular value of $DE=\M$, that is, $d\M[Y]$ is
surjective and its kernel splits.
Given $Y\in H^1(du)$ we solve $d\M[Y]W=Y$ as follows. First note that 
\begin{alignat*}{2}
&&\qquad A^{-1}[Y](W''-E[Y]\rot W') * \G &= Y \\
&\iff&
 \ip{A^{-1}[Y](W''-E[Y]\rot W') * \G,\phi}_{H^1(du)}
	&= \ip{Y,\phi}_{H^1(du)}\quad \text{ for all } \phi\in H^1(du)\\
&\iff& \ip{A^{-1}[Y](W'-E[Y]\rot W),\phi_u}_{L^2(du)}
	&= \ip{\mathcal Y,\phi_u}_{L^2(du)}
\end{alignat*} 
where $\mathcal Y := Y_u - \int_0^u Y(\tau)d\tau $.
Therefore we will have shown surjectivity if we solve 
\begin{equation*}
W_u-E[Y]\rot W=A[X_0]\mathcal Y
\end{equation*}
or equivalently (cf. Section 4)
\[
W_u=\begin{pmatrix}
0& -\ell \\
\ell & 0
\end{pmatrix}W+A[X_0]\mathcal Y
\]
where $\ell:=E[Y]$ is a positive integer. 
The solution to this with initial condition $W_0$ is
\[
W(u)=e^{Bu}(W_0+B^{-1}A[X_0]\mathcal Y(u))-B^{-1} A[X_0]\mathcal Y(u), \quad B=\begin{pmatrix}
0& -\ell \\
\ell & 0
\end{pmatrix}.
\]
Noting that $e^{Bu}=\begin{pmatrix}
\cos(\ell u)& -\sin (\ell u) \\
\sin (\ell u) & \cos(\ell u)
\end{pmatrix}$ we find that $W(2\pi)=W(0)$, and therefore $W\in H^1(du)$.

The kernel of $d\M[Y]$ splits because it is a continuous linear map and so its
kernel is a closed subspace of a Hilbert space. It then follows from the regular
values theorem that $\crit E$ is a locally connected submanifold and in
particular there is a neighbourhood $\SU$ of $Y$ that is connected.
Then the tangent space $T_{Y}\crit E=\ker dF[Y]$, and so the pair $(E, DE)$
is indeed Morse--Bott at $Y$.
\end{proof}

We therefore conclude the gradient inequality.

\begin{corollary}(Gradient inequality)\label{gradineq}
If $Y$ is a critical point of $E$ then there is a neighbourhood
$\mathcal U\subset H^1(du)$ containing $Y$ and a constant $C\in (0,\infty)$
such that 
\begin{equation}\label{gi}
\norm{DE[X]}_{H^1(du)}\geq C(E[X]-E[Y])^{1/2}, \quad \text{for all } X\in \mathcal U 
\,.
\end{equation} 
\end{corollary}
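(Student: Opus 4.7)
The plan is a direct application of Feehan's Theorem \ref{feehanthm}, with the essential Morse--Bott condition already supplied by Theorem \ref{TMmb}. Beyond that, the work is bookkeeping: identifying the Banach spaces, the functional and the gradient map, and then confirming the remaining hypotheses of Feehan's result.

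First I would fix the ambient data. Take $\mathcal X = \tilde{\mathcal X} = H^1(du)$, which is Hilbert and hence continuously embeds into its dual via the Riesz isomorphism. Choose an open neighbourhood $\mathcal U$ of $Y$ with $\mathcal U \subset \HoneC$; such a neighbourhood exists because $Y\in\sS$ satisfies $A[Y] > 0$, and $A$ is continuous on $H^1(du)$ by the estimate \eqref{EQareawelldef}. Then set
\[
f := E - E[Y] : \mathcal U \to \R, \qquad \mathcal M := DE : \mathcal U \to H^1(du).
\]

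Next I would verify the remaining hypotheses of Theorem \ref{feehanthm}. By construction $f(Y) = 0$; since $Y\in\crit E$ we have $DE[Y]=0$ and therefore $dE[Y] = 0 = f'(Y)$. The gradient map property $df[X]V = \IPh{DE[X]}{V}$ is precisely the defining identity of Lemma \ref{LMevar}. For regularity, $Q$ is a continuous quadratic form and $A$ is a continuous bilinear form on $H^1(du)$, so both are smooth on $H^1(du)$; as $A > 0$ on $\mathcal U$, the quotient $E = Q/A$ is smooth there, and in particular $f \in C^3(\mathcal U)$. Finally, Theorem \ref{TMmb} provides that $(E, DE)$, equivalently $(f, \mathcal M)$, is Morse--Bott at $Y$.

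With all hypotheses verified, Theorem \ref{feehanthm} supplies, after possibly shrinking $\mathcal U$, a constant $C > 0$ such that
\[
\norm{DE[X]}_{H^1(du)} \geq C\,|E[X] - E[Y]|^{1/2} \quad \text{for all } X \in \mathcal U,
\]
which implies the stated inequality (trivially if $E[X] < E[Y]$ the claimed bound is vacuous, and otherwise the absolute value may be dropped). There is no real obstacle in this corollary: the substantive content has been front-loaded into the surjectivity argument in Theorem \ref{TMmb}, and here we are essentially just invoking Feehan's theorem.
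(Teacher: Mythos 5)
Your proposal is correct and follows the same route as the paper: the corollary is proved there exactly by citing Theorem \ref{TMmb} for the Morse--Bott property and then invoking Feehan's Theorem \ref{feehanthm}; your extra bookkeeping (identifying $\mathcal X=\tilde{\mathcal X}=H^1(du)$, checking $f(Y)=0$, $f'(Y)=0$, smoothness of $E=Q/A$ near $Y$) just makes explicit what the paper leaves implicit. The only slight difference is in handling the sign of $E[X]-E[Y]$: the paper disposes of this in the subsequent remark by noting critical points are local minimisers so the difference is non-negative on a small $\mathcal U$, which is cleaner than calling the bound ``vacuous'' when $E[X]<E[Y]$ (where the square root would not be real).
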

\begin{proof}
Theorem \ref{TMmb} implies that $(E,DE)$ is Morse--Bott at $Y$, and so the
result follows from application of Feehan's Theorem \ref{feehanthm} in \cite{feehan2020morse}.
\end{proof}

\begin{remark}
Since critical points are local minimisers of the energy, the difference
$E[X]-E[Y]$ is non-negative for $\mathcal U$ small.
\end{remark}

Using the gradient inequality we obtain full convergence by a standard argument.
For the convenience of the reader, we include a proof below.

\begin{proposition}
Let $X:\S\times\R\rightarrow\R^2$ be a $H^1(du)$-gradient flow for $E$ on
$H^1_+(du)$.
Then there is an equilibria $Y\in\sS$ such that
$\norm{X(\cdot,t)-Y}_{H^1(du)}\to
0$ as $t\to \infty$.
\label{PNconv}
\end{proposition}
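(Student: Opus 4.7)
The plan is to apply the standard \L ojasiewicz--Simon trajectory-length argument, combining the subconvergence of Proposition \ref{PNconv2} with the gradient inequality of Corollary \ref{gradineq}. To begin I would take $Y\in\sS$ and the sequence $t_n\to\infty$ from Proposition \ref{PNconv2}, fix the $H^1(du)$-neighbourhood $\SU\ni Y$ on which $\h{DE[X]}\ge C\bigl(E[X]-E[Y]\bigr)^{1/2}$ holds, and choose $\rho>0$ with $B_\rho(Y)\subset\SU$. Set $f(t):=E[X(\cdot,t)]-E[Y]$; the gradient-flow identity $f'(t)=-\h{DE[X(\cdot,t)]}^2\le 0$ makes $f$ non-increasing, and continuity of $A$ and $Q$ in the $H^1(du)$ topology (together with $A[Y]>0$) gives $E[X(\cdot,t_n)]\to E[Y]$, so monotonicity forces $f(t)\searrow 0$. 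If $f$ reaches zero at some finite time then the flow is already stationary and the claim is immediate; otherwise $f>0$ for all $t$.

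The central computation is that, on any time interval during which $X(\cdot,\tau)\in\SU$,
\[
\frac{d}{dt}\sqrt{f(t)}
	= \frac{-\h{DE[X]}^2}{2\sqrt{f(t)}}
	\le -\frac{C}{2}\h{DE[X]}
	= -\frac{C}{2}\h{X_t},
\]
where the inequality uses Corollary \ref{gradineq}. Integrating from $s$ to $t$ yields the trajectory-length bound
\[
\int_s^t \h{X_\tau}\,d\tau
	\le \frac{2}{C}\bigl(\sqrt{f(s)}-\sqrt{f(t)}\bigr)
	\le \frac{2}{C}\sqrt{f(s)}\,.
\]

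The main obstacle is the standard continuation/confinement step: the previous estimate is only valid while the trajectory remains in $\SU$, so I must rule out escape. For this I would pick $n$ large enough that both $\h{X(\cdot,t_n)-Y}<\rho/2$ and $\tfrac{2}{C}\sqrt{f(t_n)}<\rho/2$, and then set $T:=\sup\{t\ge t_n : X(\cdot,\tau)\in B_\rho(Y)\text{ for all }\tau\in[t_n,t]\}$. For $\tau\in[t_n,T)$, applying the trajectory-length bound on $[t_n,\tau]$ together with the triangle inequality gives $\h{X(\cdot,\tau)-Y}<\rho$, so continuity rules out $T<\infty$. Consequently $\int_{t_n}^\infty\h{X_\tau}\,d\tau<\infty$, which means $t\mapsto X(\cdot,t)$ is Cauchy in $H^1(du)$ and converges to some $\tilde Y$; since $X(\cdot,t_n)\to Y$ along the subsequence, $\tilde Y=Y\in\sS$ and the full trajectory converges.
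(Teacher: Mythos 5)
Your proof is correct and follows essentially the same route as the paper: subconvergence from Proposition \ref{PNconv2}, the \L ojasiewicz--Simon inequality of Corollary \ref{gradineq}, the differential inequality for $\sqrt{E[X(\cdot,t)]-E[Y]}$ yielding finite trajectory length, and a confinement-plus-Cauchy argument. The only (minor) difference is in the confinement step, where you choose $t_n$ so that $\tfrac{2}{C}\sqrt{f(t_n)}<\rho/2$ to trap the trajectory directly, while the paper instead derives a contradiction from the finiteness of the gradient integral over the union of excursion intervals; both are standard variants of the same argument.
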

\begin{proof}
By Proposition \ref{PNconv2} there is a sequence of times $(t_i)$ and a
stationary point $Y$ such that $\norm{X(t_i)-Y}_{H^1}\to 0$, and then by
Corollary \ref{gradineq} there is a subsequence, still denoted $(t_i)$, such
that every $X(\cdot,t_i)$ is contained in a neighbourhood $\mathcal U$ of $Y$
where \eqref{gi} holds for all $X\in \mathcal U$. 
For each $t_i$, let $T_i=\sup\{t>t_i: X(\cdot,t)\in \mathcal U\}$, and let
$I:=\cup_i[t_i,T_i).$

Define $\Phi(t):=(E[X(\cdot,t)]-E[Y])^{1/2}$.
Note that $\Phi$ is decreasing and has derivative
\begin{align*}
\Phi'(t)&=\frac{1}{2}(E[X(\cdot,t)]-E[Y])^{-1/2}dE[X(\cdot,t)]X_t(\cdot,t)\\
&=-\frac{1}{2}(E[X(\cdot,t)]-E[Y])^{-1/2}\norm{DE[X(\cdot,t)]}_{H^1(du)}^2
\,.
\end{align*}
Then for all $t\in I$, since \eqref{gi} holds, 
\begin{align*}
\Phi'(t)
&\le - \frac{1}{2}C\norm{DE[X(\cdot,t)]}_{H^1(du)}^{-1}\norm{DE[X(\cdot,t)]}_{H^1(du)}^2
\intertext{or}
 \frac{C}{2}\norm{DE[X(\cdot,t)]}_{H^1(du)} &\leq -\Phi'(t)
\,.
\end{align*}
Now integrating over $I$ and using the fact that $\Phi$ is decreasing:
\begin{equation}\label{intgrad}
\int_I \norm{DE[X(\cdot,t)]}_{H^1(du)}\, dt
	\leq 2C^{-1}\Phi(t_0)\leq 2C^{-1}E[X(\cdot,0)]^{\frac12}\, .
\end{equation}
In fact, there exists $t_N$ such that $T_N=\infty$.
If not, choosing $\delta$ such that the $H^1(du)$ ball $B_{\delta}(Y)$ of radius
$\delta$ centred at $Y$ is contained in $\mathcal U$, then for each $i$ there
exists $\tau_i$ such that $X(\cdot,\tau_i)$ is on the boundary of $B_{\delta}(Y)$.
But then
\begin{align*}
\delta = \norm{X(\cdot,\tau_i)-Y}_{H^1(du)}
&\leq \norm{X(\cdot,t_i)-Y}_{H^1(du)} + \norm{X(\cdot,t_i)-X(\cdot,\tau_i)}_{H^1(du)} 
\\
&\leq \norm{X(\cdot,t_i)-Y}_{H^1(du)} + \int_{t_i}^{\tau_i}\h{DE[X(\cdot,t)]} \, dt
\end{align*}
which contradicts the boundedness of the integral in \eqref{intgrad}.
Therefore such a $t_N$ exists and then from \eqref{intgrad}
$\int_{t_N}^\infty\norm{X_t(\cdot,t)}_{H^1(du)}\, dt < \infty$, that is the length of the
gradient trajectory is finite, and so it converges. Indeed if we have any
sequence of times $t_i$ then $(X(\cdot,t_i))$ is Cauchy, because if not then
$\norm{X(\cdot,t_i)-X(t_j)}\leq \int_{t_i}^{t_j}\norm{X_t} dt$ gives a
contradiction to \eqref{intgrad}.
\end{proof}

We also obtain an exponential rate.

\begin{corollary}
\label{CYrate}
Let $X:\S\times\R\rightarrow\R^2$ be a $H^1(du)$-gradient flow for $E$ on
$H^1_+(du)$.
Then there is a multiply-covered circle $Y\in\sS$ and constants $c,C>0$ such that
\[
\h{X(\cdot,t)-Y} \le Ce^{-ct}\,.
\]
In other words, the convergence rate is exponential.
\end{corollary}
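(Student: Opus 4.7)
The plan is to bootstrap the optimal \L ojasiewicz--Simon gradient inequality from Corollary \ref{gradineq} by combining it with two standard ingredients: an ODE comparison for the energy gap, and the length-of-trajectory estimate that was already implicit in the proof of Proposition \ref{PNconv}. Let $Y\in\sS$ denote the limit from that proposition and set $\psi(t):=E[X(\cdot,t)]-E[Y]$, which is non-negative for $t$ large (since $Y$ is a local minimiser by the remark after Corollary \ref{gradineq}) and tends to $0$.

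The first step is to establish exponential decay of $\psi$. Differentiating along the flow gives
\[
\psi'(t)=dE[X(\cdot,t)]X_t(\cdot,t)=-\norm{DE[X(\cdot,t)]}_{H^1(du)}^2.
\]
For $t\ge t_N$ (with $t_N$ as in the proof of Proposition \ref{PNconv}, so that $X(\cdot,t)\in\mathcal{U}$ for all $t\ge t_N$), squaring the gradient inequality \eqref{gi} yields $\norm{DE[X(\cdot,t)]}_{H^1(du)}^2\ge C^2\psi(t)$, and hence $\psi'(t)\le -C^2\psi(t)$. Gr\"onwall then gives
\[
\psi(t)\le \psi(t_N)\,e^{-C^2(t-t_N)},\qquad t\ge t_N.
\]

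The second step converts this into decay of $\h{X(\cdot,t)-Y}$. Using the same estimate already worked out in the proof of Proposition \ref{PNconv} (namely $\tfrac{C}{2}\norm{DE[X(\cdot,t)]}_{H^1(du)}\le -\Phi'(t)$ with $\Phi=\psi^{1/2}$), integration over $[t,\infty)$ for $t\ge t_N$ gives
\[
\h{X(\cdot,t)-Y}\le \int_t^\infty \h{X_s(\cdot,s)}\,ds=\int_t^\infty \h{DE[X(\cdot,s)]}\,ds\le \frac{2}{C}\Phi(t)=\frac{2}{C}\psi(t)^{1/2}.
\]
Feeding in the exponential decay of $\psi$ yields
\[
\h{X(\cdot,t)-Y}\le \frac{2}{C}\psi(t_N)^{1/2}\,e^{-\frac{C^2}{2}(t-t_N)}\quad\text{for all } t\ge t_N,
\]
and adjusting constants on the short interval $[0,t_N]$ (where $\h{X(\cdot,t)-Y}$ is bounded by Proposition \ref{PNmiracle}) gives the statement with appropriate $c,C>0$.

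The only non-routine point is justifying that the trajectory actually remains in the neighbourhood $\mathcal{U}$ where \eqref{gi} holds for all $t\ge t_N$; but this is exactly the content of the $T_N=\infty$ argument carried out in the proof of Proposition \ref{PNconv}, which in turn relied on the finite-length bound \eqref{intgrad}. Once that is in hand, the rest is a direct ODE comparison, so I do not foresee any serious obstacle beyond bookkeeping the constants.
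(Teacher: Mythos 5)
Your proposal is correct and follows essentially the same route as the paper: both derive exponential decay of the energy gap from the gradient inequality \eqref{gi} (your $\psi'\le -C^2\psi$ is just the squared form of the paper's $\Phi'\le-\tfrac{C}{2}\Phi$), and both bound $\h{X(\cdot,t)-Y}$ by the tail integral $\int_t^\infty\h{DE[X(\cdot,s)]}\,ds$, which is controlled by $\Phi(t)$ via the differential inequality. Your intermediate step $\int_t^\infty\h{DE}\,ds\le\tfrac{2}{C}\Phi(t)$ is, if anything, a slightly cleaner way of organising the final estimate, and your explicit appeal to the $T_N=\infty$ argument to keep the trajectory in $\mathcal{U}$ correctly identifies the one point that needs care.
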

\begin{proof}
First, note that the function $\Phi$ from Proposition \ref{PNconv} satisfies, by the gradient inequality,
\begin{equation}
\label{EQrate1}
	\Phi'(t) \le -\frac{C}{2}\Phi(t)
\end{equation}
so
\begin{equation}
\label{EQrate2}
	\Phi(t) \le Ce^{-ct}
\end{equation}
for constants $c,C$.

Now from the evolution equation
\[
	X_t = -DE[X]
\]
we find
\[
	Y - X(\cdot,t) = \int_t^\infty X_{\hat t}(\cdot,\hat t)\,d\hat t
		= -\int_t^\infty DE[X(\cdot, \hat t)]\,d\hat t\,.
\]
Thus, using the notation of Proposition \ref{PNconv}
\[
	\h{Y - X(\cdot,t)}
	\le C\int_t^\infty |\Phi'(\hat t)|\,d\hat t\,.
\]
Using equations \eqref{EQrate1} and \eqref{EQrate2} we thus find (here $C$
varies from line to line but remains universal)
\begin{align*}
	\h{Y - X(\cdot,t)}
	&\le C\int_t^\infty |\Phi(\hat t)|\,d\hat t
\\
	&\le C\int_t^\infty e^{-c\hat t}\,d\hat t
\\
	&\le Ce^{-c\hat t}
\end{align*}
as required.
\end{proof}


\section{Isoperimetric inequalities}

Finally we present the application to isoperimetry.

\begin{theorem}
\label{TM1}
Suppose $X\in H^1(du)$.
Then the isoperimetric inequality holds:
\begin{equation}
\label{EQisoi}
	\I[X] \ge 1\,.
\end{equation}
\end{theorem}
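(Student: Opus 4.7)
The plan is to combine the reparametrisation-invariance of $\I$ with the monotonicity of $E$ along the $H^1(du)$-gradient flow from Theorem~\ref{TMmain}. First I will reduce to the case $X\in H^1_+(du)$: if $A[X]=0$ then $\I[X]=+\infty\ge 1$ holds trivially, and if $A[X]<0$ then the orientation-reversing map $u\mapsto 2\pi-u$ preserves $L[X]$ while flipping the sign of $A[X]$, so both $|A[X]|$ and $\I[X]$ are unchanged. Thus I may assume $X\in H^1_+(du)$ henceforth.

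Next, I will reparametrise $X$ to have constant speed. Setting
\[
\sigma(u)=\frac{2\pi}{L[X]}\int_0^u |X_\tau|\,d\tau, \qquad \tilde X = X\circ\sigma^{-1},
\]
with $\sigma^{-1}$ defined in the standard way (since $X$ is constant on any interval where $|X_u|$ vanishes, $\tilde X$ is unambiguous on the corresponding measure-zero set of jump points of $\sigma^{-1}$), one obtains $\tilde X\in H^1_+(du)$ with $|\tilde X_u|\equiv L[X]/(2\pi)$ a.e., together with $L[\tilde X]=L[X]$ and, by a direct change-of-variables calculation, $A[\tilde X]=A[X]$. A direct computation then gives
\[
E[\tilde X] = \frac{Q[\tilde X]}{A[\tilde X]} = \frac{L^2[X]}{4\pi A[X]} = \I[X]\,,
\]
which is exactly the equality case in Lemma~\ref{LMeiso}.

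I will then apply Theorem~\ref{TMmain} to $\tilde X$: the flow starting at $\tilde X$ converges in $H^1(du)$ to some $Y\in\sS$, which by Proposition~\ref{PNequil} is an $\ell$-times covered circle with $E[Y]=\ell$ a positive integer. Monotonicity of $E$ along the flow finally yields
\[
\I[X] = E[\tilde X] \ge E[Y] = \ell \ge 1,
\]
as required.

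The main obstacle is the rigorous justification of the constant-speed reparametrisation as an $H^1(du)$ map when $|X_u|$ vanishes on a set of positive measure; in that situation $\sigma^{-1}$ has a countable set of jumps, but $X$ is constant on each corresponding interval, so $\tilde X$ is still well-defined and belongs to $H^1(du)$ (with $|\tilde X_u|$ bounded a.e.). Should this direct construction be considered delicate, an alternative is to approximate $X$ in $H^1(du)$ by smooth immersions $X_n$, apply the above argument to each $X_n$ to obtain $\I[X_n]\ge 1$, and pass to the limit using the $H^1(du)$-continuity of $L$ and $A$ (both of which follow from the triangle and H\"older inequalities).
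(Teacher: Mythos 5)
Your proof is correct and follows essentially the same route as the paper: reduce to the case $A[X]>0$, reparametrise to constant speed so that $E$ of the reparametrised curve equals $\I[X]$, then use monotonicity of $E$ along the $H^1(du)$-gradient flow and convergence to an $\ell$-times covered circle with $E=\I=\ell\ge 1$. The only difference is that you justify the constant-speed reparametrisation (and offer an approximation fallback) in more detail than the paper, which simply notes that this is possible for $W^{1,1}(du)$ curves.
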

\begin{proof}
First, if $A[X] = 0$, the isoperimetric inequality \eqref{EQisoi} trivially
holds.  
So we may assume that $A[X] \ne 0$.
If $A[X] < 0$, then reparametrise $X$ by sending $u$ to $2\pi-u$.
This leaves the isoperimetric ratio invariant as there is an absolute value
sign around $A[X]$, and makes $A[X]$ positive.

Second, we reparametrise $X(\S)$ using a map $X_0\in H^1(du)$ with constant
speed.
Note that we can do this so long as $X\in W^{1,1}(du)$, so since $X\in H^1(du)$
this is no problem.
Let us consider $X_0 = X$ as initial data for a $H^1(du)$ gradient flow for
$E$.
Both of these preparations do not change the set $X_0(\S)$, its length or the
magnitude of the area.

Now, since $|(X_0)_u| = L[X_0]/2\pi$, we have
\[
Q[X_0] = \frac12 \int_0^{2\pi} \frac{L^2[X_0]}{4\pi^2}\,du
= \frac{L^2[X_0]}{4\pi}
\,.
\]
Thus, taking the $H^1(du)$-gradient flow of $E$ with $X_0\in H^1_+(du)$ as initial data,
\begin{align*}
\I[X_0]
	&= \frac{L^2[X_0]}{4\pi A[X_0]}
\\
	&= E[X_0]
\\
	&\ge \lim_{t\rightarrow\infty} E[X(\cdot,t)]
\\
	&= E\Big[\lim_{t\rightarrow\infty}X(\cdot,t)\Big]
\\
	&= E[Y]
	\ge \I[Y]\,.
\end{align*}
The second last equality uses the continuity of $E$ on $H^1(du)$ and the convergence result Proposition~\ref{PNconv}.

Proposition \ref{PNequil} implies $Y$ is an $\ell$-circle, where $\ell$ is a positive integer smaller than $E[X_0]$, and we have
\[
	\I[Y] = \ell\,.
\]
Therefore we obtain $\I[X_0] \ge \ell \ge 1$, the isoperimetric inequality.
\end{proof}

\begin{remark}
Convergence can not be in any higher topology, and indeed we must
have ``non-symmetric'' loops shrinking to zero under the flow.
An interesting point is to study what happens for a symmetric lemniscate, with $A[X_0]=0$.
We conjecture that it shrinks to a point in finite time, as do circles with negative area.
However the lemniscate is more difficult, since its energy is equal to
$-\infty$ along the entire evolution.
\end{remark}

\begin{theorem}
Suppose $X\in H^1(du)$ is a rotationally symmetric map in the sense that there
are positive integers $n, m$ such that $1\le n \le m$ and
\begin{equation}
\label{EQsym}
	X\Big(u+\frac{2\pi}{m}\Big) = \text{Rot}{\,}_{\frac{2\pi n}{m}} X(u)\,.
\end{equation}
Then we have
\[
	\I[X] \ge n\,.
\]
\end{theorem}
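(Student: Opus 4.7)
The plan is to follow the argument of Theorem \ref{TM1}, adding that the rotational symmetry \eqref{EQsym} is preserved along the $H^1(du)$-gradient flow and is inherited by its limit. We first reduce to $A[X]>0$: if $A[X]=0$ then $\I[X]=+\infty\ge n$ trivially, and the case $A[X]<0$ can be handled by reversing orientation (which preserves $\I[X]$, though it replaces the symmetry parameter $n$ by $m-n$). Reparametrising $X$ by constant speed preserves $X(\S)$, $L$, $|A|$, and (since arc length is rotation-invariant) \eqref{EQsym}. Denote the resulting map by $X_0\in H^1_+(du)$; as in Theorem \ref{TM1}, constant speed gives $E[X_0]=\I[X_0]=\I[X]$.

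Apply the $H^1(du)$-gradient flow for $E$ with initial data $X_0$, obtaining an eternal solution (Corollary \ref{CYeternal}) converging strongly in $H^1(du)$ to a limit $Y\in\sS$ (Proposition \ref{PNconv}). The key step is to show that \eqref{EQsym} is preserved for all $t$. Set
\[
\tilde X(u,t):=\text{Rot}{\,}_{-\frac{2\pi n}{m}}\,X\!\left(u+\tfrac{2\pi}{m},t\right).
\]
Since $A$, $Q$, and $E$ are invariant under rotations of $\R^2$ and under translations of the parameter $u$, and because convolution with $\G$ commutes with both (as $\G(u,v)=\G(u+h,v+h)$ and rotations pass through the integral), the right-hand side of \eqref{EQflow} is equivariant, so $\tilde X$ is also a $H^1(du)$-gradient flow for $E$. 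The hypothesis \eqref{EQsym} on $X_0$ gives $\tilde X(\cdot,0)=X_0$, so uniqueness in Corollary \ref{CYeternal} yields $\tilde X=X$ for all $t$. Passing to the limit via the strong $H^1(du)$-convergence then gives $Y(u+2\pi/m)=\text{Rot}{\,}_{2\pi n/m}Y(u)$.

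Finally, by Proposition \ref{PNequil} the limit has tangent $Y'(u)=\text{Rot}{\,}_{\ell u}(a,b)^T$ with $\ell=E[Y]\in\N$ and $(a,b)^T\neq 0$ (nonzero because $A[Y]>0$ by Corollary \ref{CYmiracle}). Differentiating the inherited symmetry of $Y$ and evaluating at $u=0$ yields
\[
\text{Rot}{\,}_{\frac{2\pi\ell}{m}}\begin{pmatrix}a\\ b\end{pmatrix}=\text{Rot}{\,}_{\frac{2\pi n}{m}}\begin{pmatrix}a\\ b\end{pmatrix},
\]
so $\text{Rot}{\,}_{\frac{2\pi(\ell-n)}{m}}$ fixes the nonzero vector $(a,b)^T$, forcing $\ell\equiv n\pmod m$. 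Since $\ell\ge 1$ and $1\le n\le m$, we conclude $\ell\ge n$, and therefore
\[
\I[X]=E[X_0]\ge E[Y]=\ell\ge n.
\]
The main obstacle is the symmetry-preservation step: once one verifies carefully that the flow commutes with the cyclic action generated by $u\mapsto u+2\pi/m$ composed with $\text{Rot}{\,}_{-2\pi n/m}$ in the target, uniqueness together with the explicit classification in Proposition \ref{PNequil} delivers the bound.
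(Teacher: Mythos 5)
Your argument is essentially the paper's: you flow the constant-speed reparametrisation, show that the combined shift $u\mapsto u+\tfrac{2\pi}{m}$ and target rotation $\text{Rot}{\,}_{2\pi n/m}$ commutes with the flow so that uniqueness (Corollary \ref{CYeternal}) propagates \eqref{EQsym} for all time and strong convergence (Proposition \ref{PNconv}) passes it to the limit $Y\in\sS$, after which Proposition \ref{PNequil} forces $\ell\equiv n\pmod m$ and hence $\ell\ge n$; your derivation of the congruence by differentiating the symmetry and using $(a,b)\ne(0,0)$ is a clean variant of the paper's evaluation of the explicit parametrisation at $u=0$ and $u=2\pi/m$. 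The one loose end you rightly flag --- that orientation reversal in the case $A[X]<0$ replaces the symmetry parameter $n$ by $m-n$, so that this reduction only yields $\I[X]\ge m-n$ --- is left equally unaddressed by the paper, which simply invokes the procedure of Theorem \ref{TM1}.
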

\begin{proof}
Following the same procedure as in the proof of Theorem \ref{TM1} we see that
$\I[X] \ge \ell$ where $\ell$ is the number of times the limit circle $Y$ has been
covered.

The symmetry condition \eqref{EQsym} is preserved into the limit $Y$.
This is due to invariance and uniqueness.
Both the $H^1(du)$ metric and the
energy functional $E$ are invariant under rotations and shifts in the $u$ parameter.  Therefore (cf. \cite[Lemma 3.1]{SWW}) if $X(u,t)$ is a $H^1(du)$-gradient flow for $E$ then so are $X(u+2\pi/m,t)$ and $Rot_{2\pi n/m}X(u,t)$. These two flows coincide at $t=0$, and so by uniqueness (Corollary \ref{CYeternal}) they coincide for all $t$, i.e. $X(u+2\pi/m,t)=Rot_{2\pi n/m}X(u,t)$.
Since the limit $Y$ is unique by Proposition \ref{PNconv}, it must also satisfy \eqref{EQsym}.

So, we find that $Y$ is a circle in the set $\sS$ that additionally satisfies
the symmetry condition \eqref{EQsym}.
This means that on one period, for example $(0,2\pi/m)$, $Y$ is a circular arc.
By rotating if necessary (translation is not needed as the centre of $Y$ must
be the origin already by the rotational symmetry; note that rotation does not
affect either the number $\ell$ or the validiting of \eqref{EQsym}), we may
take the parametrisation $u \mapsto \frac{a}{\ell}(\cos \ell u, \sin \ell u)$
for $Y$.
Since $Y$ satisfies \eqref{EQsym}, we have
\[
\text{Rot}{\,}_{\frac{2\pi n}{m}} Y(0)
=
\begin{pmatrix}
\cos \frac{2n\pi}{m} & -\sin \frac{2n\pi}{m} \\
\sin \frac{2n\pi}{m} &  \cos \frac{2n\pi}{m}
\end{pmatrix}
Y(0)
=
Y\Big(\frac{2\pi}{m}\Big)
\]
which implies
\[
\begin{pmatrix}
\cos \frac{2n\pi}{m} & -\sin \frac{2n\pi}{m} \\
\sin \frac{2n\pi}{m} &  \cos \frac{2n\pi}{m}
\end{pmatrix}
\begin{pmatrix}
1 \\ 0
\end{pmatrix}
=
\begin{pmatrix}
\cos \frac{2n\pi}{m}
\\
\sin \frac{2n\pi}{m}
\end{pmatrix}
=
\begin{pmatrix}
\cos \frac{2\ell\pi}{m}
\\
\sin \frac{2\ell\pi}{m}
\end{pmatrix}
\,.
\]
We conclude that 
$\cos \frac{2\ell\pi}{m} = \cos \frac{2n\pi}{m}$ and $\sin \frac{2n\pi}{m} = \sin \frac{2\ell\pi}{m}$, which means that
\[
\frac{2\ell\pi}{m} = \frac{2n\pi}{m} + 2k\pi\,,\qquad k\in\Z
\]
or
\[
\ell = n + km\,.
\]
From Proposition \ref{PNequil} we know that $\ell\ge1$.
Since $n\le m$ this implies that $k \ge 0$.
Therefore $\ell \ge n$, which implies 
$\I[X] \ge l \ge n$ and we are finished.
\end{proof}


\bibliographystyle{plain}
\bibliography{gradient_flows}

\end{document}